\tikzset{
  myblock/.style={
    draw,text width=0.75cm,minimum height=2cm,align=center,
    append after command={node[fill,anchor=north west,minimum size=0cm] at (\tikzlastnode.north west) {}}
  },
  mysquare/.style={
    draw,minimum size=.8cm,align=center,
    append after command={node[fill,anchor=north west,minimum size=0cm] at (\tikzlastnode.north west) {}}
  },
  twoarrows/.style n args={4}{
    decoration={
      markings,
      mark=at position #1 with {\arrow{>}\node[above] {#3};}, 
      mark=at position #2 with {\arrow{>}\node[above] {#4};} 
    },
  postaction=decorate  
  },
  onearrow/.style 2 args={
    decoration={
      markings,
      mark=at position #1 with {\arrow{>}\node[above] {#2};}, 
    },
  postaction=decorate  
  },
   fold/.style={
            draw=black,
            isosceles triangle,
            fill=white,
            minimum height=1cm,
            minimum width =0.05cm,
        }
}
\def\blackbox#1#2#3#4#5{
  \pgfgetlastxy{\llx}{\lly}
  \path #1;
  \pgfgetlastxy{\w}{\h}
  \pgfmathsetlengthmacro{\urx}{\llx+\w}
  \pgfmathsetlengthmacro{\ury}{\lly+\h}
  \draw (\llx,\lly) rectangle (\urx,\ury);
  \pgfmathsetlengthmacro{\xave}{(\llx+\urx)/2}
  \pgfmathsetlengthmacro{\yave}{\ury-8}
  \node at (\xave,\yave) {#4};
  \pgfmathsetlengthmacro{\ydiff}{\ury-\lly}
  \pgfmathsetlengthmacro{\lstep}{\ydiff/(#2+1)}
  \pgfmathsetlengthmacro{\rstep}{\ydiff/(#3+1)}
  \ifnum #2=0{}\else{ 
   \foreach \l in {1,...,#2}{
    \draw [->] ($(\llx,\lly)+(-#5/2,0)+\l*(0,\lstep)$) -- ($(\llx,\lly)+(#5/2,0)+\l*(0,\lstep)$);}}\fi
  \ifnum #3=0{}\else{
   \foreach \r in {1,...,#3}{
    \draw [->] ($(\urx,\ury)+(-#5/2,0)-\r*(0,\rstep)$) -- ($(\urx,\ury)+(#5/2,0)-\r*(0,\rstep)$);}}\fi
}
\def\blackboxinners#1#2#3#4#5{
  \pgfgetlastxy{\llx}{\lly}
  \path #1;
  \pgfgetlastxy{\w}{\h}
  \pgfmathsetlengthmacro{\urx}{\llx+\w}
  \pgfmathsetlengthmacro{\ury}{\lly+\h}
  \draw (\llx,\lly) rectangle (\urx,\ury);
  \pgfmathsetlengthmacro{\xave}{(\llx+\urx)/2}
  \pgfmathsetlengthmacro{\yave}{\ury-8}
  \node at (\xave,\yave) {#4};
  \pgfmathsetlengthmacro{\ydiff}{\ury-\lly}
  \pgfmathsetlengthmacro{\lstep}{\ydiff/(#2+1)}
  \pgfmathsetlengthmacro{\rstep}{\ydiff/(#3+1)}
  \ifnum #2=0{}\else{ 
   \foreach \l in {1,...,#2}{
    \pgfmathsetlengthmacro{\newx}{\llx+#5*28.45274/2}
    \pgfmathsetlengthmacro{\newy}{\lly+\l*\lstep}
    \node at ($(\newx,\newy)+(-1.5,\l*12-\l*\lstep)$) {\tiny$(\pgfmathparse{\newx/28.45274}\pgfmathresult cm,\pgfmathparse{\newy/28.45274}\pgfmathresult cm)$};
    \draw [->] ($(\llx,\lly)+(-#5/2,0)+\l*(0,\lstep)$) -- ($(\llx,\lly)+(#5/2,0)+\l*(0,\lstep)$);}}\fi
  \ifnum #3=0{}\else{
   \foreach \r in {1,...,#3}{
    \pgfmathsetlengthmacro{\newx}{\urx-#5*28.45274/2}
    \pgfmathsetlengthmacro{\newy}{\ury-\r*\rstep}
    \node at ($(\newx,\newy)+(1.5,-\r*12+\r*\rstep)$) {\tiny $(\pgfmathparse{\newx/28.45274}\pgfmathresult cm,\pgfmathparse{\newy/28.45274}\pgfmathresult cm)$};
    \draw [->] ($(\urx,\ury)+(-#5/2,0)-\r*(0,\rstep)$) -- ($(\urx,\ury)+(#5/2,0)-\r*(0,\rstep)$);}}\fi
}
\def\blackboxouters#1#2#3#4#5{
  \pgfgetlastxy{\llx}{\lly}
  \path #1;
  \pgfgetlastxy{\w}{\h}
  \pgfmathsetlengthmacro{\urx}{\llx+\w}
  \pgfmathsetlengthmacro{\ury}{\lly+\h}
  \draw (\llx,\lly) rectangle (\urx,\ury);
  \pgfmathsetlengthmacro{\xave}{(\llx+\urx)/2}
  \pgfmathsetlengthmacro{\yave}{\ury-8}
  \node at (\xave,\yave) {#4};
  \pgfmathsetlengthmacro{\ydiff}{\ury-\lly}
  \pgfmathsetlengthmacro{\lstep}{\ydiff/(#2+1)}
  \pgfmathsetlengthmacro{\rstep}{\ydiff/(#3+1)}
  \ifnum #2=0{}\else{ 
   \foreach \l in {1,...,#2}{
    \pgfmathsetlengthmacro{\newx}{\llx-#5*28.45274/2}
    \pgfmathsetlengthmacro{\newy}{\lly+\l*\lstep}
    \node at ($(\newx,\newy)+(-1.5,\l*12-\l*\lstep)$) {\tiny$(\pgfmathparse{\newx/28.45274}\pgfmathresult cm,\pgfmathparse{\newy/28.45274}\pgfmathresult cm)$};
    \draw [->] ($(\llx,\lly)+(-#5/2,0)+\l*(0,\lstep)$) -- ($(\llx,\lly)+(#5/2,0)+\l*(0,\lstep)$);}}\fi
  \ifnum #3=0{}\else{
   \foreach \r in {1,...,#3}{
    \pgfmathsetlengthmacro{\newx}{\urx+#5*28.45274/2}
    \pgfmathsetlengthmacro{\newy}{\ury-\r*\rstep}
    \node at ($(\newx,\newy)+(1.5,-\r*12+\r*\rstep)$) {\tiny $(\pgfmathparse{\newx/28.45274}\pgfmathresult cm,\pgfmathparse{\newy/28.45274}\pgfmathresult cm)$};
    \draw [->] ($(\urx,\ury)+(-#5/2,0)-\r*(0,\rstep)$) -- ($(\urx,\ury)+(#5/2,0)-\r*(0,\rstep)$);}}\fi
}
\def\dashbox#1#2#3#4#5{
  \pgfgetlastxy{\llx}{\lly}
  \path #1;
  \pgfgetlastxy{\w}{\h}
  \pgfmathsetlengthmacro{\urx}{\llx+\w}
  \pgfmathsetlengthmacro{\ury}{\lly+\h}
  \draw [dashed] (\llx,\lly) rectangle (\urx,\ury);
  \pgfmathsetlengthmacro{\xave}{(\llx+\urx)/2}
  \pgfmathsetlengthmacro{\yave}{\ury-8}
  \node at (\xave,\yave) {#4};
  \pgfmathsetlengthmacro{\ydiff}{\ury-\lly}
  \pgfmathsetlengthmacro{\lstep}{\ydiff/(#2+1)}
  \pgfmathsetlengthmacro{\rstep}{\ydiff/(#3+1)}
  \ifnum #2=0{}\else{ 
   \foreach \l in {1,...,#2}{
    \draw [->] ($(\llx,\lly)+(-#5/2,0)+\l*(0,\lstep)$) -- ($(\llx,\lly)+(#5/2,0)+\l*(0,\lstep)$);}}\fi
  \ifnum #3=0{}\else{
   \foreach \r in {1,...,#3}{
    \draw [->] ($(\urx,\ury)+(-#5/2,0)-\r*(0,\rstep)$) -- ($(\urx,\ury)+(#5/2,0)-\r*(0,\rstep)$);}}\fi
}
\def\fillbox#1#2#3#4#5#6{
  \pgfgetlastxy{\llx}{\lly}
  \path #1;
  \pgfgetlastxy{\w}{\h}
  \pgfmathsetlengthmacro{\urx}{\llx+\w}
  \pgfmathsetlengthmacro{\ury}{\lly+\h}
  \filldraw [fill=gray!#6] (\llx,\lly) rectangle (\urx,\ury);
  \pgfmathsetlengthmacro{\xave}{(\llx+\urx)/2}
  \pgfmathsetlengthmacro{\yave}{\ury-8}
  \node at (\xave,\yave) {#4};
  \pgfmathsetlengthmacro{\ydiff}{\ury-\lly}
  \pgfmathsetlengthmacro{\lstep}{\ydiff/(#2+1)}
  \pgfmathsetlengthmacro{\rstep}{\ydiff/(#3+1)}
  \ifnum #2=0{}\else{ 
   \foreach \l in {1,...,#2}{
    \draw [->] ($(\llx,\lly)+(-#5/2,0)+\l*(0,\lstep)$) -- ($(\llx,\lly)+(#5/2,0)+\l*(0,\lstep)$);}}\fi
  \ifnum #3=0{}\else{
   \foreach \r in {1,...,#3}{
    \draw [->] ($(\urx,\ury)+(-#5/2,0)-\r*(0,\rstep)$) -- ($(\urx,\ury)+(#5/2,0)-\r*(0,\rstep)$);}}\fi
}
\def\directarc#1#2{
  \path #1;
  \pgfgetlastxy{\lx}{\ly}
  \path #2;
  \pgfgetlastxy{\rx}{\ry}
  \pgfmathsetlengthmacro{\xave}{(\lx+\rx)/2}
  \draw #1 .. controls (\xave,\ly) and (\xave,\ry) .. #2;
}
\def\loopright#1#2#3{
  \path #1;
  \pgfgetlastxy{\ux}{\uy}
  \path #2;
  \pgfgetlastxy{\lx}{\ly}
  \pgfmathsetlengthmacro{\maxx}{max(\ux,\lx)}
  \pgfmathsetlengthmacro{\farx}{\maxx+#3}
  \draw #1 .. controls (\farx,\uy) and (\farx,\ly) .. #2;
}
\def\loopleft#1#2#3{
  \path #1;
  \pgfgetlastxy{\ux}{\uy}
  \path #2;
  \pgfgetlastxy{\lx}{\ly}
  \pgfmathsetlengthmacro{\minx}{min(\ux,\lx)}
  \pgfmathsetlengthmacro{\farx}{\minx-#3}
  \draw #1 .. controls (\farx,\uy) and (\farx,\ly) .. #2;
}
\def\fancyarc#1#2#3#4{
  \path #1;
  \pgfgetlastxy{\ux}{\uy}
  \path #2;
  \pgfgetlastxy{\lx}{\ly}
  \pgfmathsetlengthmacro{\xave}{(\lx+\ux)/2}
  \pgfmathsetlengthmacro{\yave}{(\ly+\uy)/2+#4}
  \loopleft{#1}{(\xave,\yave)}{#3}
  \loopright{#2}{(\xave,\yave)}{#3}
}
\def\activetikz#1{$$\begin{tikzpicture}#1\end{tikzpicture}$$}
\def\inactivetikz#1{}
\newtheorem{thm}{Theorem}[section]
\newtheorem{lem}[thm]{Lemma}
\newtheorem{prop}[thm]{Proposition}
\theoremstyle{remark}
\newtheorem{rem}[thm]{Remark}
\theoremstyle{definition}
\newtheorem{defn}[thm]{Definition}
\newtheorem*{notation*}{Notation}
\theoremstyle{definition}
\newtheorem{ex}[thm]{Example}
\DeclareMathOperator{\id}{id}
\DeclareMathOperator{\Hom}{Hom}
\DeclareMathOperator{\inj}{\hookrightarrow}
\DeclareMathOperator{\Ob}{Ob}
\def\taking{\colon}
\def\too{\longrightarrow}
\newcommand{\To}[1]{\xrightarrow{#1}}
\newcommand{\From}[1]{\xleftarrow{#1}}
\def\iso{\cong}
\def\ss{\subseteq}
\def\bigid{\mathds{1}}
\newcommand{\ol}[1]{\overline{#1}}
\def\Set{\mathbf{Set}}
\def\Sets{\mathbf{Sets}}
\def\FinSet{\mathbf{FinSet}}
\def\ODS{\mathbf{ODS}}
\def\VF{\mathbf{VF}}
\newcommand{\TFS}[1]{\mathbf{TFS}_{#1}}
\def\op{^{\text{op}}}
\def\Man{\mathbf{Man}}
\def\Euc{\mathbf{Euc}}
\def\mcC{\mathcal{C}}
\def\mcD{\mathcal{D}}
\def\mcG{\mathcal{G}}
\def\mcL{\mathcal{L}}
\def\bfL{\mathbf{Lin}}
\def\bfW{\mathbf{W}}
\newcommand{\Opd}[1]{\mathcal{O}#1}
\def\NN{\mathbb N}
\def\RR{\mathbb R}
\newcommand{\inp}[1]{#1^{\text{in}}}
\newcommand{\outp}[1]{#1^{\text{out}}}
\newcommand{\expt}[2]{#1_{#2}^{\text{exp}}}
\newcommand{\loc}[2]{#1_{#2}^{\text{loc}}}
\title[Algebras of Open Systems on the Operad of Wiring Diagrams]{Algebras of Open Dynamical Systems on the Operad of Wiring Diagrams}
\author{Dmitry Vagner}
\author {David I. Spivak}
\author{Eugene Lerman}
\thanks{Spivak was supported by ONR grant N000141310260 and AFOSR grant FA9550-14-1-0031.}
\begin{document}

\begin{abstract}
In this paper, we use the language of operads to study open dynamical systems. More specifically, we study the algebraic nature of assembling complex dynamical systems from an interconnection of simpler ones. The syntactic architecture of such interconnections is encoded using the visual language of wiring diagrams. We define the symmetric monoidal category $\bfW$, from which we may construct an operad $\Opd{\bfW}$, whose objects are black boxes with input and output ports, and whose morphisms are wiring diagrams, thus prescribing the algebraic rules for interconnection. We then define two \mbox{$\bfW$-algebras} $\mcG$ and $\mcL$, which associate semantic content to the structures in $\bfW$. Respectively, they correspond to general and to linear systems of differential equations, in which an internal state is controlled by inputs and produces outputs. As an example, we use these algebras to formalize the classical problem of systems of tanks interconnected by pipes, and hence make explicit the algebraic relationships among systems at different levels of granularity.
\end{abstract}

\maketitle

\section{Introduction}
\label{sec:intro}

It is widely believed that complex systems of interest in the sciences and engineering are both modular and hierarchical. Network theory uses the tools and visual language of graph theory to model such systems, and has proven to be both effective and flexible in describing their modular character. However, the field has put less of an emphasis on finding powerful and versatile language for describing the hierarchical aspects of complex systems. There is growing confidence that category theory can provide the necessary conceptual setting for this project. This is seen, for example, in Mikhail Gromov's well-known claim, ``the mathematical language developed by the end of the 20th century by far exceeds in its expressive power anything, even imaginable, say, before 1960. Any meaningful idea coming from science can be fully developed in this language.'' \cite{Gromov}

Joyal and Street's work on string diagrams \cite{JSTensor} for monoidal categories and (with Verity) on traced monoidal categories \cite{JSTraced} has been used for decades to visualize compositions and feedback in networked systems, for example in the theory of flow charts \cite{Arthan}. Precursors, such as Penrose diagrams and flow diagrams, have been used in physics and the theory of computation, respectively, since the 1970's \cite{Scott,Baez1}.

Over the past several years, the second author and collaborators have been developing a novel approach to modular hierarchical systems based on the language of operads and symmetric monoidal categories \cite{Spivak2,RupelSpivak}. The main contribution to the theory of string diagrams of the present research program is the inclusion of an outer box, which allows for holarchic \cite{Koestler} combinations of these diagrams. That is, the parts can be assembled into a whole, which can itself be a part. The composition of such assemblies can now be viewed as morphism composition in an operad. In fact, there is a strong connection between traced monoidal categories and algebras on these operads, such as our operad $\Opd{\bfW}$ of wiring diagrams, though it will not be explained here (see \cite{SpivakSchultzRupel} for details). 

More broadly, category theory can organize graphical languages found in a variety of applied contexts. For example, it is demonstrated in \cite{Baez1} and \cite{Coecke} that the theory of monoidal categories unifies the diagrams coming from diverse fields such as physics, topology, logic, computation, and linguistics. More recently, as in \cite{Baez2}, there has been growing interest in viewing more traditionally applied fields, such as ecology, biology, chemistry, electrical engineering, and control theory through such a lens. Specifically, category theory has been used to draw connections among visual languages such as planar knot diagrams, Feynman diagrams, circuit diagrams, signal flow graphs, Petri nets, entity relationship diagrams, social networks, and flow charts. This research is building toward what John Baez has called ``a foundation of applied mathematics'' \cite{BaezTalk}.

\hspace{3 mm}

The goal of the present paper is to show that open continuous time dynamical systems form an algebra over a certain (colored) operad, which we call the operad of \emph{wiring diagrams}.  It is a variant of the operad that appeared in \cite{RupelSpivak}. That is, wiring diagrams provide a straightforward, diagrammatic language to understand how dynamical systems that describe processes can be built up from the systems that describe its sub-processes. 

More precisely, we will define a symmetric monoidal category $\bfW$ of black boxes and wiring diagrams. Its underlying operad $\Opd{\bfW}$ is a graphical language for building larger black boxes out of an interconnected set of smaller ones. We then define two $\bfW$-algebras, $\mcG$ and $\mcL$, which encode \emph{open dynamical systems}, i.e., differential equations of the form
\begin{align}\label{dia:basic form}
\begin{cases}
\dot{Q}=\inp{f}(Q,input)\\
output=\outp{f}(Q)
\end{cases}
\end{align}
where $Q$ represents an internal state vector, $\dot{Q}=\frac{dQ}{dt}$ represents its time derivative, and $input$ and $output$ represent inputs to and outputs from the system. In $\mcG$, the functions $\inp{f}$ and $\outp{f}$ are smooth, whereas in the subalgebra $\mcL\ss\mcG$, they are moreover linear. The fact that $\mcG$ and $\mcL$ are $\bfW$-algebras captures the fact that these systems are closed under wiring diagram interconnection. 

Our notion of interconnection is a generalization of that in Deville and Lerman \cite{DL1}, \cite{DL2}, \cite{DL3}. Their version of interconnection produces a closed system from open ones, and can be understood in the present context as a morphism whose codomain is the closed box (see Definition~\ref{def:mon}). Graph fibrations between wiring diagrams form an important part of their  formalism, though we do not discuss that aspect here.

This paper is the third in a series, following \cite{RupelSpivak} and \cite{Spivak2}, on using wiring diagrams to model interactions. The algebra we present here, that of open systems, is distinct from the algebras of relations and of propagators studied in earlier works. Beyond the dichotomy of discrete vs. continuous, these algebras are markedly different in structure. For one thing, the internal wires in \cite{RupelSpivak} themselves carry state, whereas here, a wire should be thought of as instantaneously transmitting its contents from an output site to an input site. Another difference between our algebra and those of previous works is that the algebras here involve \emph{open systems} in which, as in (\ref{dia:basic form}), the instantaneous change of state is a function of the current state and the input, whereas the output depends only on the current state (see Definition~\ref{def:general algebra}). The differences between these algebras is also reflected in a mild difference between the operad we use here and the one used in previous work. 

\subsection{Motivating example}

The motivating example for the algebras in this paper comes from classical differential equations pedagogy; namely, systems of tanks containing salt water concentrations, with pipes carrying fluid among them. The systems of ODEs produced by such applications constitute a subset of those our language can address; they are linear systems with a certain form (see Example~\ref{ex:as promised}). To ground the discussion, we consider a specific example.

\begin{ex} \label{ex:main}
Figure~\ref{fig:pipebrine} below reimagines a problem from Boyce and DiPrima's canonical text \mbox{\cite[Figure 7.1.6]{BD}} as a dynamical system over a \emph{wiring diagram}.

\begin{figure}[ht]
\activetikz{
	\path(0,0);
	\blackbox{(10,5)}{2}{1}{$Y$}{.7}
	        \node at (.4,3.6) {\small $\inp{Y}_{a}$};
	        \node at (.4,1.9) {\small $\inp{Y}_{b}$};
	        \node at (9.6,2.75) {\small $\outp{Y}_a$};
	\path(2,1.5);
	\blackbox{(2,2)}{2}{1}{$X_1$}{.5}
	        \node at (3,2.6) {\tiny $Q_1(t)$ oz salt};
	        \node at (3,2.3) {\tiny 30 gal water};
	        \node at (1.7,3.06) {\small $\inp{X}_{1a}$};
	        \node at (1.7,2.4) {\small $\inp{X}_{1b}$};
	        \node at (4.38,2.7) {\small $\outp{X}_{1a}$};
	\path(6,1.5);
	\blackbox{(2,2)}{2}{2}{$X_2$}{.5}
	        \node at (7,2.6) {\tiny $Q_2(t)$ oz salt};
	        \node at (7,2.3) {\tiny 20 gal water};
	        \node at (5.7,3.07) {\small $\inp{X}_{2a}$};
	        \node at (5.7,2.4) {\small $\inp{X}_{2b}$};
	        \node at (8.37,3.07) {\small $\outp{X}_{2a}$};
	        \node at (8.37,2.37) {\small $\outp{X}_{2b}$};
	\directarc{(4.25,2.5)}{(5.75,2.16667)} 
	    \node at (5,2) {\tiny 3 gal/min};
	\directarc{(0.35,1.6667)}{(1.75,2.83333)} 
	    \node at (.7,1.4) {\tiny 1.5 gal/min};
	    \node at (.7,1.2) {\tiny 1 oz/gal};
	\fancyarc{(0.35,3.3333)}{(5.75,2.83333)}{-40}{25} 
	    \node at (.6,3.1) {\tiny 1 gal/min};
	    \node at (.6,2.9) {\tiny 3 oz/gal};
	\directarc{(8.25,2.8333)}{(9.65,2.5)} 
	    \node at (9.5,2.3) {\tiny 2.5};
	    \node at (9.5,2.1) {\tiny gal/min};
	\fancyarc{(1.75,2.16667)}{(8.25,2.16667)}{20}{-45} 
	    \node at (5,.3) {\tiny 1.5 gal/min};
}
\caption{A dynamical system from Boyce and DiPrima interpreted over a wiring diagram $\Phi\taking X_1,X_2\to Y$ in $\Opd{\bfW}$.}
\label{fig:pipebrine}
\end{figure}

In this diagram, $X_1$ and $X_2$ are boxes that represent tanks consisting of salt water solution. The functions $Q_1(t)$ and $Q_2(t)$ represent the amount of salt (in ounces) found in 30 and 20 gallons of water, respectively. These tanks are interconnected with each other by pipes embedded within a total system $Y$. The prescription for how wires are attached among the boxes is formally encoded in the wiring diagram $\Phi:X_1,X_2\to Y$, as we will discuss in Definition \ref{def:W}.

Both tanks are being fed salt water concentrations at constant rates from the outside world. Specifically, $X_1$ is fed a 1 ounce salt per gallon water solution at 1.5 gallons per minute and $X_2$ is fed a 3 ounce salt per gallon water solution at 1 gallon per minute. The tanks also both feed each other their solutions, with $X_1$ feeding $X_2$ at 3 gallons per minute and $X_2$ feeding $X_1$ at 1.5 gallons per minute. Finally, $X_2$ feeds the outside world its solution at 2.5 gallons per minute. 

The dynamics of the salt water concentrations both within and leaving each tank $X_i$ is encoded in a linear open system $f_i$, consisting of a differential equation for $Q_i$ and a readout map for each $X_i$ output (see Definition \ref{def:opensystem}). Our algebra $\mcL$ allows one to assign a linear open system $f_i$ to each tank $X_i$, and by functoriality the morphism $\Phi\taking X_1,X_2\to Y$ produces a linear open system for the larger box $Y$. We will explore this construction in detail, in particular providing explicit formulas for it in the linear case, as well as for more general systems of ODEs.
\end{ex}

\section{Preliminary Notions}
\label{sec:pre}

Throughout this paper we use the language of monoidal categories and functors. Depending on the audience, appropriate background on basic category theory can be found in MacLane \cite{MacLane}, Awodey \cite{Awodey}, or Spivak \cite{Spivak}. Leinster~\cite{Leinster} is a good source for more specific information on monoidal categories and operads. We refer the reader to \cite{KFA} for an introduction to dynamical systems.

\begin{notation*}
We denote the category of sets and functions by $\Set$ and the full subcategory spanned by finite sets as $\FinSet$. We generally do not concern ourselves with cardinality issues. We follow Leinster \cite{Leinster} and use $\times$ for binary product and $\Pi$ for arbitrary product, and dually $+$ for binary coproduct and $\amalg$ for arbitrary coproduct in any category. By {\em operad} we always mean symmetric colored operad or, equivalently, symmetric multicategory.
\end{notation*}

\subsection{Monoidal categories and operads} In Section~\ref{sec:W}, we will construct the symmetric monoidal category $(\bfW,\oplus,0)$ of boxes and wiring diagrams, which we often simply denote as $\bfW$. We will sometimes consider the underlying operad $\Opd{\bfW}$, obtained by applying the fully faithful functor 
\[\mathcal{O}\taking \mathbf{SMC}\to\mathbf{Opd}\]
to $\bfW$. A brief description of this functor $\Opd{}$ is given below in Definition~\ref{def:SMC to Opd}.

\begin{defn}\label{def:SMC to Opd}

Let $\mathbf{SMC}$ denote the category of symmetric monoidal categories and lax monoidal functors; and $\mathbf{Opd}$ be the category of operads and operad functors. Given a symmetric monoidal category $(\mcC,\otimes,I _\mcC)\in\Ob\mathbf{SMC}$, we define the operad $\Opd{\mcC}$ as follows:
\[\Ob \Opd{\mcC}:=\Ob\mcC, \hspace{10 mm} \Hom_{\Opd{\mcC}} (X_1,\ldots,X_n;Y):=\Hom_{\mcC}(X_1\otimes\cdots\otimes X_n,Y)\]
for any $n\in\NN$ and objects $X_1,\ldots,X_n,Y\in\Ob\mcC$.

Now suppose $F\taking (\mcC,\otimes,I _{\mcC})\to(\mcD,\odot,I _{\mcD})$ is a lax monoidal functor in $\mathbf{SMC}$. By definition such a functor is equipped with a morphism
\[\mu\taking FX_1\odot\cdots\odot FX_n\to F(X_1\otimes\cdots\otimes X_n),\]
natural in the $X_i$, called the {\em coherence map}. With this map in hand, we define the operad functor $\Opd{F}\taking \Opd{\mcC}\to \Opd{\mcD}$ by stating how it acts on objects $X$ and morphisms $\Phi\taking X_1,\ldots,X_n\to Y$ in $\Opd{\mcC}$:
\[\Opd{F}(X):=F(X),\hspace{1.8 mm} \Opd{F}(\Phi:X_1,\ldots,X_n\to Y):=F(\Phi)\circ\mu:FX_1\odot\cdots\odot FX_n\to FY.\]
\end{defn}

\begin{ex} \label{ex:sets} Consider the symmetric monoidal category $(\Set ,\times,\star)$, where $\times$ is the cartesian product of sets and $\star$ a one element set. Define \mbox{$\Sets:=\Opd{\Set }$} as in Definition \ref{def:SMC to Opd}. Explicitly, $\Sets$ is the operad in which an object is a set and a morphism $f\taking X_1,\ldots,X_n\to Y$ is a function $f\taking X_1\times\cdots\times X_n\to Y$.
\end{ex}

\begin{defn} \label{def:algebra}
Let $\mcC$ be a symmetric monoidal category and let $\Set=(\Set,\times,\star)$ be as in Example \ref{ex:sets}. A \emph{$\mcC$-algebra} is a lax monoidal functor $\mcC\to\Set$. Similarly, if $\mcD$ is an operad, a \emph{$\mcD$-algebra} is defined as an operad functor $\mcD\to\Sets$. 
\end{defn}

To avoid subscripts, we will generally use the formalism of SMCs in this paper. Definitions~\ref{def:SMC to Opd} and \ref{def:algebra} can be applied throughout to recast everything we do in terms of operads. The primary reason operads may be preferable in applications is that they suggest more compelling pictures. Hence throughout this paper, depictions of wiring diagrams will often be operadic, i.e., have many input boxes wired together into one output box.

\subsection{Typed sets} Each box in a wiring diagram will consist of finite sets of ports, each labelled by a type. To capture this idea precisely, we define the notion of typed finite sets. By a \emph{finite product} category, we mean a category that is closed under taking finite products.

\begin{defn}\label{def:typed finite sets} 
Let $\mcC$ be a small finite product category. The category of \emph{$\mcC$-typed finite sets}, denoted $\TFS{\mcC}$, is defined as follows. An object in $\TFS{\mcC}$ is a map from a finite set to the objects of $\mcC$: 
\[\Ob\TFS{\mcC}:=\{(A,\tau)\; |\; A\in\Ob\FinSet, \tau\taking A\to\Ob\mcC)\}.\] 
Intuitively, one can think of a typed finite set as a finite unordered list of $\mcC$-objects. For any element $a\in A$, we call the object $\tau(a)$ its {\em type}. If the typing function $\tau$ is clear from context, we may denote $(A,\tau)$ simply by $A$.

A morphism $q\taking(A,\tau)\to (A',\tau')$ in $\TFS{\mcC}$ consists of a function $q\taking A\to A'$ that makes the following diagram of finite sets commute:
\[\xymatrix{
A \ar[rr]^q \ar[rd]_\tau
& {}
& A' \ar[ld]^{\tau'}\\
&\Ob\mcC
} \]
Note that $\TFS{\mcC}$ is a cocartesian monoidal category.

We refer to the morphisms of $\TFS{\mcC}$ as {\em $\mcC$-typed functions}. If a $\mcC$-typed function $q$ is bijective, we call it a \emph{$\mcC$-typed bijection}.
\end{defn}

In other words, $\TFS{\mcC}$ is the comma category for the diagram 
$$\FinSet\To{i}\Set\From{\Ob\mcC}\{*\}$$
where $i$ is the inclusion.

\begin{defn} \label{def:depprod}
Let $\mcC$ be a finite product category, and let $(A,\tau)\in\Ob\TFS{\mcC}$ be a $\mcC$-typed finite set. Its \emph{dependent product} $\ol{(A,\tau)}\in\Ob\mcC$ is defined as 
\[\overline{(A,\tau)}:=\prod_{a\in A}\tau(a).\] 
Coordinate projections and diagonals are generalized as follows. Given a typed function $q\taking (A,\tau)\to (A',\tau')$ in $\TFS{\mcC}$ we define 
\[\ol{q}\taking \ol{(A',\tau')}\to\ol{(A,\tau)}\]
to be the unique morphism for which the following diagram commutes for all \mbox{$a\in A$}:
\[\xymatrix{
\prod_{a'\in A'}\tau'(a') \ar[r]^{\overline{q}} \ar[d]_{\pi_{q(a)}}
& \prod_{a\in A}\tau(a) \ar[d]^{\pi_a}\\
\tau'(q(a))\ar@{=}[r]&\tau(a)
} \]
By the universal property for products, this defines a functor, 
\[\ol{\;\cdot\;}\taking\TFS{\mcC}\op\to\mcC.\]
\end{defn}

\begin{lem} The dependent product functor $\TFS{\mcC}\op\to\mcC$ is strong monoidal. In particular, for any finite set $I$ whose elements index typed finite sets $(A_i,\tau_i)$, there is a canonical isomorphism in $\mcC$, 
$$\ol{\coprod_{i\in I}(A_i,\tau_i)}\iso\prod_{i\in I}\ol{(A_i,\tau_i)}.$$
\end{lem}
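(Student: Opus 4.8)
The plan is to verify directly that the dependent product functor $\ol{\;\cdot\;}\colon\TFS{\mcC}\op\to\mcC$, equipped with suitable structure morphisms, satisfies the axioms of a strong monoidal functor from $(\TFS{\mcC}\op, \amalg, \emptyset)\op$—note that the coproduct in $\TFS{\mcC}$ becomes the product in the opposite category, and that $\mcC$ carries its cartesian monoidal structure $(\mcC,\times,\ast)$. First I would record the two coherence isomorphisms. On the unit: the empty typed finite set $(\emptyset,!)$ has $\ol{(\emptyset,!)}=\prod_{a\in\emptyset}\tau(a)$, which is by definition the terminal object $\ast$ of $\mcC$, so the unit comparison is an identity (or the canonical iso to the chosen terminal object). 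On binary products: given $(A,\tau)$ and $(B,\sigma)$, their coproduct in $\TFS{\mcC}$ is $(A+B,[\tau,\sigma])$, and I claim there is a canonical iso $\phi_{A,B}\colon \ol{(A+B,[\tau,\sigma])}\To{}\ol{(A,\tau)}\times\ol{(B,\sigma)}$ in $\mcC$. This is nothing but the universal property of the product indexed by a disjoint union: $\prod_{c\in A+B}[\tau,\sigma](c)$ together with the projections grouped according to whether $c$ lies in $A$ or in $B$ is a product cone over $\{\prod_{a\in A}\tau(a),\ \prod_{b\in B}\sigma(b)\}$, and hence canonically isomorphic to their binary product. The general $I$-indexed statement follows the same way, using that $\prod_{i\in I}\prod_{a\in A_i}\tau_i(a)$ satisfies the universal property of $\prod_{a\in\coprod_i A_i}$ the evident typing.

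Next I would check naturality of $\phi$ in both arguments: given typed functions $q\colon(A,\tau)\to(A',\tau')$ and $r\colon(B,\sigma)\to(B',\sigma')$, the square relating $\ol{q+r}$ to $\ol q\times\ol r$ commutes. This is a diagram chase using the defining property of $\ol{(\;\cdot\;)}$ on morphisms from Definition~\ref{def:depprod} — namely that $\ol{q+r}$ is characterized by its composites with the coordinate projections $\pi_c$ for $c\in A'+B'$ — together with the fact that the projections out of a product over $A'+B'$ restrict to the projections out of the products over $A'$ and over $B'$. Then I would verify the three monoidal-functor coherence axioms: the associativity pentagon (comparing the two ways of reassociating $\ol{(A+B)+C}$ versus $\ol{A+(B+C)}$ through $\ol A\times\ol B\times\ol C$) and the two unit triangles. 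Each reduces, after unwinding $\ol{(\;\cdot\;)}$, to the corresponding coherence identity for the cartesian monoidal structure of $\mcC$ applied to the products being compared; here one uses that the coproduct structure maps of $\TFS{\mcC}$ (associator, unitors) are themselves induced from those of $\FinSet$, which $\ol{(\;\cdot\;)}$ sends to the cartesian ones of $\mcC$ by the morphism-level part of Definition~\ref{def:depprod}.

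I do not expect any serious obstacle: every step is an instance of the universal property of products in $\mcC$ combined with the bookkeeping of how a coproduct of finite sets decomposes an indexed product. The one point requiring a little care is keeping the variance straight — $\ol{(\;\cdot\;)}$ is contravariant, so it is a monoidal functor $\TFS{\mcC}\op\to\mcC$ where the source has monoidal product inherited from $(\TFS{\mcC},\amalg,\emptyset)$, and one must confirm that a strong monoidal functor out of $\mcD\op$ is the same data as what one gets by dualizing; since $\amalg$ is also the product in $\TFS{\mcC}\op$ and $\times$ is the product in $\mcC$, this is the statement that $\ol{(\;\cdot\;)}$ is a strong monoidal functor between cartesian monoidal categories, i.e.\ that it preserves finite products up to canonical isomorphism, which is exactly what the displayed formula $\ol{\coprod_{i\in I}(A_i,\tau_i)}\iso\prod_{i\in I}\ol{(A_i,\tau_i)}$ asserts. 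So in practice the proof is: exhibit $\phi_{A,B}$ and the unit iso, cite the universal property for naturality and coherence, and remark that the $I$-indexed version is the evident iteration.
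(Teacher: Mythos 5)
Your proof is correct, and it is exactly the routine argument the paper implicitly relies on: the paper states this lemma without proof, treating it as an immediate consequence of the universal property of products indexed over a disjoint union. Your write-up supplies the details faithfully (unit object, binary comparison isomorphism, naturality, coherence, and the variance bookkeeping for $\ol{\;\cdot\;}\taking\TFS{\mcC}\op\to\mcC$), with only a harmless notational slip in writing the source as $(\TFS{\mcC}\op,\amalg,\varnothing)\op$ where the intended monoidal category is $(\TFS{\mcC},\amalg,\varnothing)\op$.
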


\begin{rem} \label{rem:default} The category of second-countable smooth manifolds and smooth maps is essentially small (by the embedding theorem) so we choose a small representative and denote it $\Man$. Note that $\Man$ is a finite product category. Manifolds will be our default typing, in the sense that we generally take $\mcC:=\Man$ in Definition \ref{def:typed finite sets} and denote
\begin{align}\label{dia:TFS}
\TFS{}:=\TFS{\Man}.
\end{align}
We thus refer to the objects, morphisms, and isomorphisms in $\TFS{}$ simply as \emph{typed finite sets}, \emph{typed functions}, and \emph{typed bijections}, respectively.
\end{rem}

\begin{rem} \label{rem:TFSL}
The ports of each box in a wiring diagram will be labeled by manifolds because they are the natural setting for geometrically interpreting differential equations (see \cite{SpiM-CalcMan}). For simplicity, one may wish to restrict attention to the full subcategory $\Euc$ of Euclidean spaces $\RR^n$ for $n\in\NN$, because they are the usual domains for ODEs found in the literature; or to the (non-full) subcategory $\bfL$ of Euclidean spaces and linear maps between them, because they characterize linear systems of ODEs. We will return to $\TFS{\bfL}$ in Section \ref{sec:l}.
\end{rem}

\subsection{Open systems} As a final preliminary, we define our notion of open dynamical system. Recall that every manifold $M$ has a {\em tangent bundle} manifold, denoted $TM$, and a smooth projection map $p\taking TM\to M$. For any point $m\in M$, the preimage $T_mM:=p^{-1}(m)$ has the structure of a vector space, called the {\em tangent space of $M$ at $m$}. If $M\iso\RR^n$ is a Euclidean space then also $T_mM\iso\RR^n$ for every point $m\in M$. A {\em vector field on $M$} is a smooth map $g\taking M\to TM$ such that $p\circ g=\id_M$. See \cite{SpiM-CalcMan} or \cite{Warner} for more background. 

For the purposes of this paper we make the following definition of open systems; this may not be completely standard.

\begin{defn}\label{def:opensystem}
Let $M,\inp{U},\outp{U}\in\Ob\Man$ be smooth manifolds and $TM$ be the tangent bundle of $M$. Let $f=(\inp{f},\outp{f})$ denote a pair of smooth maps
\begin{align*}
\begin{cases}
\inp{f}\taking M\times\inp{U}\to TM\\
\outp{f}\taking M\to\outp{U}
\end{cases}
\end{align*}
where, for all $(m,u)\in M\times\inp{U}$ we have $\inp{f}(m,u)\in T_mM$; that is, the following diagram commutes:
\[ \xymatrix{
M\times \inp{U} \ar[rr]^{\inp{f}} \ar[rd]_{\pi_M}
& {}
& TM \ar[ld]^{p}\\
& M
} \]
We sometimes use $f$ to denote the whole tuple, 
$$f=(M,\inp{U},\outp{U},f),$$ 
which we refer to as an \emph{open dynamical system} (or \emph{open system} for short). We call $M$ the {\em state space}, $\inp{U}$ the \textit{input space}, $\outp{U}$ the \textit{output space}, $\inp{f}$ the \emph{differential equation}, and $\outp{f}$ the \emph{readout map} of the open system. 

Note that the pair $f=(\inp{f},\outp{f})$ is determined by a single smooth map
$$f\taking M\times\inp{U}\to TM\times\outp{U},$$
which, by a minor abuse of notation, we also denote by $f$. 

In the special case that $M,U^\text{in},U^\text{out}\in\Ob\bfL$ are Euclidean spaces and $f$ is a linear map (or equivalently $\inp{f}$ and $\outp{f}$ are linear), we call $f$ a \emph{linear open system}.

\begin{rem}\label{ex:dynamical system} 
  Let $M$ be a smooth manifold, and let
  \mbox{$\inp{U}=\outp{U}=\RR^0$} be trivial. Then an open system in
  the sense of Definition~\ref{def:opensystem} is a smooth map
  \mbox{$f\taking M\to TM$} over $M$, in other words, a vector field
  on $M$. From the geometric point of view, vector fields are
  autonomous (i.e., closed!) dynamical systems; see~\cite{Teschl}.  
\end{rem}
\begin{rem}
  For an arbitrary manifold $\inp{U}$, a map \mbox{$M\times\inp{U}\to
    TM$} can be considered as a function $\inp{U}\to\VF(M)$, where
  $\VF(M)$ is the set of vector fields on $M$. Hence, $\inp{U}$ {\em
    controls} the behavior of the system in the usual sense.
\end{rem}
%
%
%
%
\end{defn}

\begin{rem} \label{rem:thepoint} Given an open system $f$ we can form
  a new open system by feeding the readout of $f$ into the inputs of
  $f$.
For example suppose the open system is of the form
\[
\begin{cases}
M\times A\times B \xrightarrow{F} TM \\
g= (g_A, g_B)\colon M \to C\times B,
\end{cases}
\]
where $A$, $B$, $C$ and $M$ are manifolds.
Define $F'\colon M\times A\to TM$ by
\[
F'(m,a) := F(m, a, g_B(m))\qquad \textrm{ for all }\quad (m,a)\in M\times A.
\] 
Then
\[
\begin{cases}
M\times A \xrightarrow{F'} TM \\
g_A\colon M \to C
\end{cases}
\]
is a new open system obtained by plugging a readout of $f$ into the
space of inputs $B$.  Compare with Figure~\ref{fig:wiringdiagram}.

This looks a little boring.  
It becomes more interesting when we start with several open systems,
take their product and then plug (some of the) outputs into inputs.
For example suppose we start with two open systems
\[
\begin{cases}
M_1\times A\times B \xrightarrow{F_1} TM_1 \\
g_1\colon M_1 \to C
\end{cases}
\]
and
\[
\begin{cases}
M_2\times C \xrightarrow{F_2} TM_2 \\
g_2 = (g_B,g_D)\colon M_2 \to B\times D
\end{cases}.
\]
Here, again, all capital letters denote manifolds.
Take their product; we get
\[
\begin{cases}
M_1\times A\times B \times M_2\times C 
\xrightarrow{(F_1,F_2)} TM_1\times TM_2 \\
(g_1,g_2)\colon M_1\times M_2 \to C\times B\times D
\end{cases}
\]
Now plug in the functions $g_B$ and $g_1$ into inputs.  We get a new system
\[
 \begin{cases}
M_1\times  M_2\times A \xrightarrow{F'} TM_1\times TM_2 \\
g'\colon M_1\times M_2 \to  D
\end{cases}
\]
where 
\[
F'(m_1, m_2, a):= (F_1(m_1, a, g_B(m_2)), F_2 (m_2, g_1(m_1)).
\]
 Compare with Figure~\ref{fig:WD}.
 Making these kinds of operations on open systems precise for an arbitrary number of interacting systems is the point
 of our paper.
\end{rem}


By defining the appropriate morphisms, we can consider open dynamical systems as being objects in a category. We are not aware of this notion being defined previously in the literature, but it is convenient for our purposes.

\begin{defn} \label{def:odscat} Suppose that $M_i,\inp{U}_i,\outp{U}_i\in\Ob\Man$ and $(M_i,\inp{U}_i,\outp{U}_i,f_i)$ is an open system for \mbox{$i\in\{1,2\}$}. A \emph{morphism of open systems} 
\[\zeta\taking(M_1,\inp{U}_1,\outp{U}_1,f_1)\to (M_2,\inp{U}_2,\outp{U}_2,f_2)\]
is a triple $(\zeta_M,\zeta_{\inp{U}},\zeta_{\outp{U}})$ of smooth maps $\zeta_M\taking M_1\to M_2$, $\zeta_{\inp{U}}\taking \inp{U}_1\to \inp{U}_2$, and $\zeta_{\outp{U}}\taking \outp{U}_1\to \outp{U}_2$, such that the following diagram commutes:

\[\xymatrix{
M_1\times\inp{U}_1 \ar[r]^{f_1} \ar[d]_{\zeta_M\times\zeta_{\inp{U}}}
&TM_1\times\outp{U}_1  \ar[d]^{T\zeta_M\times\zeta_{\outp{U}}}\\
M_2\times\inp{U}_2 \ar[r]_{f_2}
& TM_2\times\outp{U}_2
} \]

This defines the category $\ODS $ of open dynamical systems. We define the subcategory $\ODS _\bfL\ss\ODS$ by restricting our objects to linear open systems, as in Definition~\ref{def:opensystem}, and imposing that the three maps in $\zeta$ are linear. 
\end{defn}

As in Remark~\ref{rem:thepoint}, we will often want to combine two or more interconnected open systems into one larger one. As we shall see in Section~\ref{sec:g}, this will involve taking a product of the smaller open systems. Before we define this formally, we first remind the reader that the tangent space functor $T$ is strong monoidal, i.e., it canonically preserves products, 
$$T(M_1\times M_2)\iso TM_1\times TM_2.$$

\begin{lem} \label{def:osprod}
The category $\ODS$ of open systems has all finite products. That is, if $I$ is a finite set and $f_i=(M_i,\inp{U}_i,\outp{U}_i,f_i)\in\Ob\ODS$ is an open system for each $i\in I$, then their product is 
$$\prod_{i\in I}f_i=\left(\prod_{i\in I}M_i,\prod_{i\in I}\inp{U}_i,\prod_{i\in I}\outp{U}_i,\prod_{i\in I}f_i\right)$$
with the obvious projection maps.
\end{lem}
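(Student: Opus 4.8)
The plan is to verify directly that the tuple $\prod_{i\in I} f_i$ is a well-defined object of $\ODS$ and then that it satisfies the universal property of a product with respect to the projection morphisms. The first task is to check that $\bigl(\prod_i M_i, \prod_i \inp{U}_i, \prod_i \outp{U}_i, \prod_i f_i\bigr)$ is genuinely an open system in the sense of Definition~\ref{def:opensystem}. Here $\prod_i f_i$ is shorthand for the product map
\[
\prod_{i\in I} f_i \taking \Bigl(\prod_i M_i\Bigr)\times\Bigl(\prod_i \inp{U}_i\Bigr)
\iso \prod_i (M_i\times\inp{U}_i) \To{\prod_i f_i} \prod_i (TM_i\times\outp{U}_i)
\iso \Bigl(\prod_i TM_i\Bigr)\times\Bigl(\prod_i \outp{U}_i\Bigr)
\iso T\Bigl(\prod_i M_i\Bigr)\times\prod_i\outp{U}_i,
\]
where the last isomorphism uses that $T$ is strong monoidal, as recalled just before the statement. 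The only nontrivial thing to check is that the differential-equation component lands in the correct tangent space, i.e. that $\inp{(\prod_i f_i)}(m,u)\in T_m\bigl(\prod_i M_i\bigr)$ for all $(m,u)$; this follows because the identification $T(\prod_i M_i)\iso\prod_i TM_i$ carries the fiber over $m=(m_i)_i$ to $\prod_i T_{m_i}M_i$, and each coordinate $\inp{f_i}(m_i,u_i)$ lies in $T_{m_i}M_i$ by hypothesis. So the tangency condition (the commuting triangle over $M$ in Definition~\ref{def:opensystem}) holds coordinatewise and hence holds for the product.

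Next I would exhibit the projection morphisms. For each $j\in I$, the candidate projection $\pi_j\taking \prod_i f_i\to f_j$ is the triple $(\pi_{M_j},\pi_{\inp{U}_j},\pi_{\outp{U}_j})$ of the obvious coordinate projections of manifolds (each of which is smooth, and linear if we are in $\ODS_\bfL$). To see that this triple is a morphism of open systems in the sense of Definition~\ref{def:odscat}, one checks the required square commutes; but again this is immediate because $f_j$ composed with the appropriate projections is, by construction of a product map, exactly the $j$-th coordinate of $\prod_i f_i$, and $T\pi_{M_j}$ agrees with the $j$-th projection under the monoidal isomorphism $T(\prod_i M_i)\iso\prod_i TM_i$ (naturality/strong monoidality of $T$).

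Finally I would check the universal property. Given an open system $g=(N,\inp{V},\outp{V},g)$ together with morphisms $\zeta^{(i)}\taking g\to f_i$ for each $i$, the components $\zeta^{(i)}_M\taking N\to M_i$, $\zeta^{(i)}_{\inp{U}}\taking \inp{V}\to\inp{U}_i$, $\zeta^{(i)}_{\outp{U}}\taking \outp{V}\to\outp{U}_i$ assemble, by the universal properties of products in $\Man$ (resp.\ $\bfL$), into unique smooth (resp.\ linear) maps $\zeta_M\taking N\to\prod_i M_i$, $\zeta_{\inp{U}}\taking\inp{V}\to\prod_i\inp{U}_i$, $\zeta_{\outp{U}}\taking\outp{V}\to\prod_i\outp{U}_i$. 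It remains to verify that $\zeta:=(\zeta_M,\zeta_{\inp{U}},\zeta_{\outp{U}})$ is a morphism of open systems, i.e.\ that the big square of Definition~\ref{def:odscat} commutes for $g$ and $\prod_i f_i$; since the target is a product, this square commutes iff it commutes after post-composing with each projection $\pi_j$, and that reduced square is precisely the square witnessing that $\zeta^{(j)}$ is a morphism, which holds by assumption. Uniqueness of $\zeta$ follows because each of its three components is forced by the corresponding universal property in $\Man$ (or $\bfL$). Applying the same argument verbatim with all maps required linear handles $\ODS_\bfL$. I do not anticipate a genuine obstacle here; the one point deserving care is bookkeeping the strong-monoidality isomorphism for $T$ so that the tangency condition and the compatibility of $T\pi_{M_j}$ with projections are stated precisely — everything else is a routine appeal to universal properties in $\Man$ and componentwise checking.
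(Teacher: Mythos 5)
Your proof is correct. The paper states this lemma without proof, treating it as routine; your argument supplies exactly the expected verification --- that the product tuple satisfies the tangency condition via the strong monoidality of $T$, that the coordinate projections are morphisms in $\ODS$, and that the universal property follows componentwise from the universal properties of products in $\Man$ (resp.\ $\bfL$) --- and I see no gaps in it.
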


\section{The Operad of Wiring Diagrams}
\label{sec:W}

In this section, we define the symmetric monoidal category $(\bfW,\oplus,0)$ of wiring diagrams. We then use Definition~\ref{def:SMC to Opd} to define the wiring diagram operad $\Opd{\bfW}$, which situates our pictorial setting. We begin by formally defining the underlying category $\bfW$ and continue with some concrete examples to explicate this definition.

\begin{defn}\label{def:W} 
The category $\bfW$ has objects \emph{boxes} and morphisms \emph{wiring diagrams}. A box $X$ is an ordered pair of $\Man$-typed finite sets (Definition~\ref{def:typed finite sets}), 
\[X=(\inp{X},\outp{X})\in\Ob\TFS{}\times\Ob\TFS{}.\]
Let $\inp{X}=(A,\tau)$ and $\outp{X}=(A',\tau')$. Then we refer to elements $a\in A$ and $a'\in A'$ as \emph{input ports} and \textit{output ports}, respectively. We call $\tau(a)\in\Ob\Man$ the \emph{type} of port $a$, and similarly for $\tau'(a')$.

A wiring diagram $\Phi\taking X\to Y$ in $\bfW$ is a triple $(X,Y,\varphi)$, where $\varphi$ is a typed bijection (see Definition \ref{def:typed finite sets})
\begin{align}\label{dia:wd function}
\varphi\taking\inp{X}+\outp{Y}\xrightarrow{\cong} \outp{X}+\inp{Y},
\end{align}
satisfying the following condition:
\begin{description}
\item[no passing wires] $\varphi(\outp{Y})\cap\inp{Y}=\varnothing$, or equivalently $\varphi(\outp{Y})\ss\outp{X}$. 
\end{description}
This condition allows us to decompose $\varphi$ into a pair $\varphi=(\inp{\varphi},\outp{\varphi})$:
\begin{align}\label{dia:components of wd}
   \left\{
     \begin{array}{l}
       \inp{\varphi}\taking \inp{X} \to \outp{X}+\inp{Y} \\
       \outp{\varphi}\taking \outp{Y} \to \outp{X}
     \end{array}
   \right.
\end{align}

We often identify the wiring diagram $\Phi=(X,Y,\varphi)$ with the typed bijection $\varphi$, or equivalently its corresponding pair $(\inp{\varphi},\outp{\varphi})$.

By a \emph{wire} in $\Phi$, we mean a pair $(a,b)$, where $a\in\inp{X}+\outp{Y}$, $b\in\outp{X}+\inp{Y}$, and $\varphi(a)=b$. In other words a wire in $\Phi$ is a pair of ports connected by $\phi$.

The \emph{identity} wiring diagram $\iota:X\to X$ is given by the identity morphism $\inp{X}+\outp{X}\to\inp{X}+\outp{X}$ in $\TFS{}$.

Now suppose $\Phi=(X,Y,\varphi)$ and $\Psi=(Y,Z,\psi)$ are wiring diagrams. We define their \emph{composition} as $\Psi\circ\Phi=(X,Z,\omega)$, where $\omega=(\inp{\omega},\outp{\omega})$ is given by the pair of dashed arrows making the following diagrams commute. 
\begin{equation}\label{dia:composition diagrams}
\xymatrixcolsep{3.5pc}
\xymatrix{
\inp{X} 
    \ar[dd]_{\inp{\varphi}} 
    \ar@{-->}[r]^{\inp{\omega}} &
\outp{X}+\inp{Z} \\
{} & 
\outp{X}+\outp{X}+\inp{Z}
    \ar[u]_{\nabla+\bigid _{\inp{Z}}} \\
\outp{X}+\inp{Y}
    \ar[r]_-{\bigid _{\outp{X}}+\inp{\psi}} & 
\outp{X}+\outp{Y}+\inp{Z}
    \ar[u]_{\bigid _{\outp{X}}+\outp{\varphi}+\bigid _{\inp{Z}}}
} \mskip5mu
\xymatrixcolsep{1.5pc}
\xymatrix{
\outp{Z} 
    \ar[rd]_{\outp{\psi}} 
    \ar@{-->}[rr]^{\outp{\omega}}
&
&\outp{X}\\
&\outp{Y} 
    \ar[ru]_{\outp{\varphi}}}
\end{equation}
Here $\nabla\taking \outp{X}+\outp{X}\to \outp{X}$ is the codiagonal map in $\TFS{}$.
\end{defn}

\begin{rem}\label{rem:different Cs}
For any finite product category $\mcC$, we may define the category $\bfW_{\mcC}$ by replacing $\Man$ with $\mcC$, and $\TFS{}$ with $\TFS{\mcC}$, in Definition~\ref{def:W}. In particular, as in Remark~\ref{rem:TFSL}, we have the symmetric monoidal category $\bfW_{\bfL}$ of linearly typed wiring diagrams.
\end{rem}

What we are calling a box is nothing more than an interface; at this stage it has no semantics, e.g., in terms of differential equations. Each box can be given a pictorial representation, as in Example~\ref{ex:pictorial box} below.

\begin{ex} \label{ex:pictorial box}
As a convention, we depict a box $X=(\{a,b\},\{c\})$ with input ports connecting on the left and output ports connecting on the right, as in Figure~\ref{fig:box} below. When types are displayed, we label ports on the exterior of their box and their types adjacently on the interior of the box with a `:' symbol in between to designate typing. Reading types off of this figure, we see that the type of input port $a$ is the manifold $\RR$, that of input port $b$ is the circle $S^1$, and that of output port $c$ is the torus $T^2$.

\begin{figure}[!ht]
\activetikz{
	\path(0,0);\blackbox{(3.5,2.5)}{2}{1}{$X$}{1.2}
	\node at (-.25,1.8666) {\small $a:$};
	\node at (.25,1.8666) {\small $\RR$};
	\node at (-.25,1.0333) {\small $b:$};
	\node at (.25,1.0333) {\small $S^1$};
	\node at (3.75,1.4333) {\small $:c$};
	\node at (3.25,1.4666) {\small $T^2$};
}
\caption{A box with two input ports, of types $\RR$ and $S^1$, and one output port with type $T^2$.}
\label{fig:box}
\end{figure}
\end{ex}


A morphism in $\bfW$ is a wiring diagram $\Phi=(X,Y,\varphi)$, the idea being that a smaller box $X$ (the domain) is nested inside of a larger box $Y$ (the codomain). The ports of $X$ and $Y$ are then interconnected by wires, as specified by the typed bijection $\varphi$. We will now see an example of a wiring diagram, accompanied by a picture.

\begin{ex}\label{ex:wiringdiagram} Reading off the wiring diagram $\Phi=(X,Y,\varphi)$ drawn below in Figure~\ref{fig:wiringdiagram}, we have the following data for boxes: \[\begin{matrix} \inp{X}=\{a,b\} & \outp{X}=\{c,d\} \\ \inp{Y}=\{m\} & \outp{Y}=\{n\}\end{matrix}\]
Table~\ref{tab:wiringdiagram} makes $\varphi$ explicit via a list of its wires, i.e., pairs $(\gamma,\varphi(\gamma))$.

\noindent\begin{minipage}{\linewidth}
\[
\begin{array}{c||c|c|c}
\rule[-4pt]{0pt}{16pt}
\gamma\in\inp{X}+\outp{Y}& a & b & n 
\\\hline
\rule[-4pt]{0pt}{16pt}
\varphi(\gamma)\in\outp{X}+\inp{Y} & m & d & c
\end{array}
\]
\smallskip
\captionof{table}{} \label{tab:wiringdiagram} 
\end{minipage}

\vspace{-5 mm}

\begin{figure}[ht]
\activetikz{
	\path(0,0);\blackbox{(5,4)}{1}{1}{$Y$}{.5}
	\node at (0.2,2.2) {\small $m$};
	\node at (4.85,2.2) {\small $n$};
	\path (1,1);\blackbox{(3,2)}{2}{2}{\small$X$}{.25}
	\node at (0.875,2.533) {\small $a$};
	\node at (0.875,1.866) {\small $b$};
	\node at (4.2,2.533) {\small $c$};
	\node at (4.2,1.866) {\small $d$};
	\directarc{(.25,2)}{(.875,2.3333)}
	\directarc{(4.125,2.3333)}{(4.75,2)}
	\fancyarc{(.875,1.6666)}{(4.125,1.6666)}{20}{-40}
}
\caption{A Wiring Diagram $\Phi=(X,Y,\varphi)$.}
\label{fig:wiringdiagram}
\end{figure}
\end{ex}


\begin{rem} \label{rem:samestate}
The condition that $\varphi$ be typed, as in Definition \ref{def:typed finite sets}, ensures that if two ports are connected by a wire then the associated types are the same. In particular, in Example~\ref{ex:wiringdiagram} above, $(a,b,n)$ must be the same type tuple as $(m,d,c)$.
\end{rem}

Now that we have made wiring diagrams concrete and visual, we can do the same for their composition.

\begin{ex}\label{ex:composition}
In Figure~\ref{fig:compose}, we visualize the composition of two wiring diagrams $\Phi=(X,Y,\varphi)$ and $\Psi=(Y,Z,\psi)$ to form $\Psi\circ\Phi=(X,Z,\omega)$. Composition is depicted by drawing the wiring diagram for $\Psi$ and then, inside of the $Y$ box, drawing in the wiring diagram for $\Phi$. Finally, to depict the composition $\Psi\circ\Phi$ as one single wiring diagram, one simply ``erases" the $Y$ box, leaving the $X$ and $Z$ boxes interconnected among themselves. Figure~\ref{fig:compose} represents such a procedure by depicting the $Y$ box with a dashed arrow.

It's important to note that the wires also connect, e.g. if a wire in $\Psi$ connects a $Z$ port to some $Y$ port, and that $Y$ port attaches via a $\Phi$ wire to some $X$ port, then these wires ``link together" to a total wire in $\Psi\circ\Phi$, connecting a $Z$ port with an $X$ port. Table~\ref{tab:compose} below traces the wires of $\Psi\circ\Phi$ through the $\inp{\omega}$ and $\outp{\omega}$ composition diagrams in (\ref{dia:composition diagrams}) on its left and right side, respectively. The left portion of the table starts with $\gamma\in\inp{X}$ and ends at $\inp{\omega}(\gamma)\in\outp{X}+\inp{Z}$, with intermediary steps of the composition denoted with superscripts $\gamma^n$. The right portion of the table starts with $\gamma\in\outp{Z}$ then goes through the intermediary of $\gamma'\in\outp{Y}$ and finally reaches $\outp{\omega}(\gamma)\in\outp{Z}$. We skip lines on the right portion to match the spacing on the left.

\noindent\begin{minipage}{\linewidth}
\[
\begin{array}{c||c|c|c||c||c}
\rule[-4pt]{0pt}{16pt}
\gamma\in\inp{X}& a & b & c & v & \gamma\in\outp{Z}
\\\hline
\rule[-4pt]{0pt}{16pt}
\gamma^1\in\outp{X}+\inp{Y} & d & k & l & {} & {}
\\\hline
\rule[-4pt]{0pt}{16pt}
\gamma^2\in\outp{X}+\outp{Y}+\inp{Z} & d & u & n & m & \gamma'\in\outp{Y}
\\\hline
\rule[-4pt]{0pt}{16pt}
\gamma^3\in\outp{X}+\outp{X}+\inp{Z} & d & u & f & {} & {}
\\\hline
\rule[-4pt]{0pt}{16pt}
\inp{\omega}(\gamma)\in\outp{X}+\inp{Z} & d & u & f & e & \outp{\omega}(\gamma)\in\outp{X}
\end{array}
\]
\smallskip
\captionof{table}{} \label{tab:compose} 
\end{minipage}


\begin{figure}[ht]
\activetikz{
    \path (-1,-1); \blackbox{(7,6)}{1}{1}{$Z$}{.8}
    \node at (-0.8,2.2) {\small $u$};
	\node at (5.8,2.2) {\small $v$};
	\path(0,0);\dashbox{(5,4)}{2}{2}{$Y$}{.5}
	\node at (0.2,2.88) {\small $k$};
	\node at (4.8,2.88) {\small $m$};
	\node at (0.2,1.53) {\small $l$};
	\node at (4.8,1.53) {\small $n$};
	\path (1,1);\blackbox{(3,2)}{3}{3}{\small$X$}{.25}
	\node at (1.3,1.5) {\small $c$};
	\node at (1.3,2) {\small $b$};
	\node at (1.3,2.5) {\small $a$};
	\node at (3.7,1.5) {\small $f$};
	\node at (3.7,2) {\small $e$};
	\node at (3.7,2.5) {\small $d$};
	\directarc{(-.6,2)}{(-.25,2.666)}
	\directarc{(5.25,2.666)}{(5.6,2)}
	\directarc{(0.25,1.333)}{(0.875,1.5)}
	\directarc{(4.125,1.5)}{(4.75,1.333)}
	\directarc{(0.25,2.666)}{(0.875,2)}
	\directarc{(4.125,2)}{(4.75,2.666)}
	\fancyarc{(-0.25,1.333)}{(5.25,1.333)}{25}{-60}
	\fancyarc{(0.875,2.5)}{(4.125,2.5)}{15}{28}
}
\caption{A wiring diagram composition $\Psi\circ\Phi=(X,Z,\omega)$ of $\Phi=(X,Y,\varphi)$ and $\Psi=(Y,Z,\psi)$, with dashed medium box $Y$.}
\label{fig:compose}
\end{figure}
\end{ex}

\begin{rem} \label{rem:pathology} 
The condition that $\varphi$ be both injective and surjective prohibits {\em exposed} ports and {\em split} ports, respectively, as depicted in Figure~\ref{fig:unsafe}{\bf a}. The {\em no passing wires} condition on $\varphi(\outp{Y})$ prohibits wires that go straight across the $Y$ box, as seen in the intermediate box of Figure~\ref{fig:unsafe}{\bf b}. 
\end{rem}

\begin{figure}[!ht]
\activetikz{
	\node at (1.5,-.2){\bf a};
	\path(0,0);\blackbox{(3,2.5)}{2}{1}{\small$Y$}{.5}
	\path(1,.3);\blackbox{(1,1)}{2}{2}{\small$X$}{.25}
	\fancyarc{(.875,.9667)}{(2.125,.9667)}{10}{18}
	\directarc{(2.125,.9667)}{(2.75,1.25)}
	\directarc{(.25,.8333)}{(.875,.6333)}
	\node at (5.5,-.2){\bf b};
	\path(4,0);\blackbox{(3,2.5)}{0}{0}{\small$Z$}{.5}
	\path(4.75,.2);\dashbox{(1.5,1.5)}{1}{1}{\small$Y$}{.25}
	\path(5,.3); \blackbox{(1,.5)}{0}{0}{\small$X$}{.1}
	\directarc{(4.85,.95)}{(6.1,.95)}
	\fancyarc{(4.65,.95)}{(6.3,.95)}{15}{30}
}
\caption{{\bf (a)} A faux-wiring diagram violating the bijectivity condition in Definition \ref{def:W}. \\ {\bf (b)} A composition of diagrams in which a loop emerges because the inner diagram has a (prohibited) passing wire.}
\label{fig:unsafe}
\end{figure}

Now that we have formally defined and concretely explicated the category $\bfW$, we will make it into a monoidal category by defining its tensor product.

\begin{defn}\label{def:mon}
Let $X_1,X_2,,Y_1,Y_2\in\Ob\bfW$ be boxes and $\Phi_1\taking X_1\to Y_2$ and $\Phi_2\taking X_2\to Y_2$ be wiring diagrams. The \emph{monoidal product} $\oplus$ is given by
\[X_1\oplus X_2:=\left(\inp{X}_1+\inp{X}_2\;,\;\outp{X}_1+\outp{X}_2\;\right), \hspace{10 mm} \Phi_1\oplus\Phi_2:=\Phi_1+\Phi_2.\]
The \emph{closed box} $0=\{\varnothing,\varnothing\}$ is the monoidal unit.
\end{defn}

\begin{rem}
Once we add semantics in Section~\ref{sec:g}, closed boxes will correspond to \emph{autonomous systems}, which do not interact with any outside environment (see Remark~\ref{ex:dynamical system}).
\end{rem}

We now make this monoidal product explicit with an example.

\begin{ex} Consider boxes $X=(\{x_1,x_2\},\{x_3,x_4\})$ and $Y=(\{y_1\},\{y_2,y_3\})$ depicted below.

\activetikz{
	\path(-1.5,0);\blackbox{(1.5,1)}{2}{2}{\small$X$}{.25}
	\node at (-1.2,.666) {\tiny$x_1$};
	\node at (-1.2,.333) {\tiny$x_2$};
	\node at (-.3,.666) {\tiny$x_3$};
	\node at (-.3,.333) {\tiny$x_4$};
	\path(1.5,0);\blackbox{(1.5,1)}{1}{2}{\small$Y$}{.25}
	\node at (1.8,.5){\tiny$y_1$};
	\node at (2.7,.666){\tiny$y_2$};
	\node at (2.7,.333){\tiny$y_3$};
} We depict their tensor $X\oplus Y=(\{x_1,x_2,y_1\},\{x_3,x_4,y_2,y_3\})$ by stacking boxes.

\activetikz{
	\path(0,0);\blackbox{(2,2)}{3}{4}{\small$X\oplus Y$}{.25}
	\node at (.3,1.5) {\tiny$x_1$};
	\node at (.3,1) {\tiny$x_2$};
	\node at (.3,.5) {\tiny$y_1$};
	\node at (1.7,1.6) {\tiny$x_3$};
	\node at (1.7,1.2){\tiny$x_4$};
	\node at (1.7,.8){\tiny$y_2$};
	\node at (1.7,.4){\tiny$y_3$};
}

Similarly, consider the following wiring diagrams (with ports left unlabelled).

\activetikz{
	\node at (1.25,2.2){\small $\Phi_1 \taking X_1\to Y_1$};
	\path(0,0);\blackbox{(2.5,2)}{1}{1}{\small$Y_1$}{.5}
	\path(.5,.2);\blackbox{(1.5,1)}{2}{2}{\small $X_1$}{.25}
	\directarc{(.25,1)}{(.375,0.8666)}
	\directarc{(2.125,0.8666)}{(2.25,1)}
	\fancyarc{(.375,0.5333)}{(2.125,0.5333)}{14}{28.5}
	\node at (6.25,2.2){\small $\Phi_2 \taking X_2\to Y_2$};
	\path(5,0);\blackbox{(2.5,2)}{1}{1}{\small $Y_2$}{.5}
	\path(5.5,.5);\blackbox{(1.5,1)}{1}{1}{\small $X_2$}{.25}
    \directarc{(5.25,1)}{(5.375,1)}
	\directarc{(7.125,1)}{(7.25,1)}
} We can depict their composition via stacking.

\activetikz{
	\node at (1.5,3.25){\small $\Phi_1\oplus\Phi_2 \taking X_1\oplus X_2\to Y_1\oplus Y_2$};
	\path(0,0);\blackbox{(3,3)}{2}{2}{\small$Y_1\oplus Y_2$}{.5}
	\path(.8,.5);\blackbox{(1.4,1.6)}{3}{3}{\scriptsize$X_1\oplus X_2$}{.25}
	\directarc{(.25,2)}{(.675,1.7)}
	\directarc{(2.325,1.7)}{(2.75,2)}
	\fancyarc{(.675,1.3)}{(2.325,1.3)}{15}{30}
	\directarc{(.25,1)}{(.675,.9)}
	\directarc{(2.325,.9)}{(2.75,1)}
}
\end{ex}


We now prove that the above data characterizing $(\bfW,\oplus,0)$ indeed constitutes a symmetric monoidal category, at which point we can, as advertised, invoke Definition~\ref{def:SMC to Opd} to define the operad $\Opd{\bfW}$.

\begin{prop} \label{prop:W is SMC}
The category $\bfW$ in Definition~\ref{def:W} and the monoidal product $\oplus$ with unit $0$ in Definition~\ref{def:mon} form a symmetric monoidal category $(\bfW,\oplus,0)$.
\end{prop}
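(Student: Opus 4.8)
The plan is to verify the symmetric monoidal category axioms in three stages: first that $\bfW$ is a category, then that $\oplus$ is a well-defined bifunctor with unit $0$, and finally that the associativity, unit, and symmetry constraints (which will all be inherited from the coproduct structure of $\TFS{}$) satisfy the coherence (pentagon and hexagon) conditions.

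\emph{Step 1: $\bfW$ is a category.} I would first check that composition is well-defined, i.e., that the typed bijection $\omega = (\inp{\omega},\outp{\omega})$ built from the diagrams in (\ref{dia:composition diagrams}) really is a typed \emph{bijection} $\inp{X}+\outp{Z}\xrightarrow{\cong}\outp{X}+\inp{Z}$ and that it satisfies the \textbf{no passing wires} condition $\omega(\outp{Z})\subseteq\outp{X}$. The latter is immediate from the definition of $\outp{\omega}=\outp{\varphi}\circ\outp{\psi}$, since $\outp{\varphi}$ lands in $\outp{X}$. Bijectivity of $\omega$ is the one genuinely fiddly point: one argues that $\inp{\omega}$ is obtained by following $\varphi$, then — on the part that lands in $\inp{Y}$ — following $\psi$, and then — on the part of \emph{that} which lands in $\outp{Y}$ — following $\outp{\varphi}$ back into $\outp{X}$; since each of $\varphi,\psi,\outp{\varphi}$ is injective on the relevant summand and the summands are handled disjointly, the composite is a bijection. (It is cleanest to phrase this as: the map $\varphi\sqcup\psi$ on $\inp{X}+\outp{Y}+\outp{Z}$ decomposes into finitely many ``wire chains'', each of which terminates in $\outp{X}+\inp{Z}$ because there are no passing wires and the sets are finite; $\omega$ records the endpoints of these chains.) Associativity of composition then follows by a diagram chase comparing $(\Xi\circ\Psi)\circ\Phi$ and $\Xi\circ(\Psi\circ\Phi)$ — both send an input port of $X$ to the endpoint of its maximal wire chain through $Y$ and $Z$, so they agree; the codiagonal bookkeeping $\nabla$ is associative. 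The identity wiring diagram $\iota_X$ is readily seen to be a two-sided unit.

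\emph{Step 2: $\oplus$ is a monoidal bifunctor with unit $0$.} Here I would observe that $\oplus$ is, on both objects and morphisms, just the coproduct $+$ in $\TFS{}$ applied componentwise to the in/out parts; since $\TFS{}$ is cocartesian monoidal (Definition~\ref{def:typed finite sets}), $+$ is functorial and coherently associative/unital, with unit the empty typed set, so $X\oplus 0 = X$, $0\oplus X = X$. The \textbf{no passing wires} condition is preserved under $+$ (a disjoint union of wiring diagrams has no passing wires iff each summand does), so $\Phi_1\oplus\Phi_2$ is again a wiring diagram; and functoriality $(\Psi_1\oplus\Psi_2)\circ(\Phi_1\oplus\Phi_2) = (\Psi_1\circ\Phi_1)\oplus(\Psi_2\circ\Phi_2)$ holds because composition of wiring diagrams (Step 1) is itself built from coproduct-compatible operations, so everything splits along the disjoint union.

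\emph{Step 3: coherence.} The associator, left/right unitors, and braiding for $(\bfW,\oplus,0)$ are taken to be the wiring diagrams whose underlying typed bijections are the canonical coherence isomorphisms of the cocartesian structure on $\TFS{}$ (these manifestly satisfy \textbf{no passing wires}, since they involve no $Y$-to-$Y$ passing at all, only a relabelling of summands). That these satisfy the pentagon and hexagon identities, and the naturality squares, reduces — via the componentwise description of $\oplus$ and the fact that composition in $\bfW$ restricts to ordinary composition in $\TFS{}$ on the ``structural'' wiring diagrams — to the corresponding coherence identities in $\TFS{}$, which hold because $(\TFS{},+,\varnothing)$ is symmetric monoidal.

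The main obstacle is the bijectivity verification in Step 1: one must be careful that the recursive ``follow the wire chain through $Y$'' process genuinely terminates and produces a bijection, rather than, say, producing a loop (which is exactly what the \textbf{no passing wires} condition and finiteness rule out, cf. Figure~\ref{fig:unsafe}{\bf b}). Everything else is a routine consequence of $\TFS{}$ being a cocartesian (hence symmetric) monoidal category and the componentwise nature of $\oplus$.
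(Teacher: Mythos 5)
Your overall skeleton---category axioms, then the monoidal bifunctor, then coherence---matches the paper's, and your handling of \textbf{no passing wires} for $\omega$, of $\oplus$ as a componentwise coproduct, and of coherence via the cocartesian structure of $(\TFS{},+,\varnothing)$ is essentially what the paper does (it dispatches those points in a line or two each). Where you genuinely diverge is in the two nontrivial verifications, and your route is correct but more elementary. For bijectivity of $\omega$, the paper avoids element-chasing entirely: it splits $\outp{X}$ into the image $\expt{X}{\varphi}$ of $\outp{\varphi}$ and its complement $\loc{X}{\varphi}$, replaces $(\inp{\varphi},\outp{\varphi})$ by a pair of honest typed bijections $\inp{X}\iso\loc{X}{\varphi}+\inp{Y}$ and $\outp{Y}\iso\expt{X}{\varphi}$, and then rewrites the defining diagrams as a single commutative square exhibiting $\omega$ as a composite of typed bijections. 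Your wire-chain argument reaches the same conclusion concretely, but note that for a binary composite there is no genuine recursion or termination worry: the diagrams in (\ref{dia:composition diagrams}) apply $\inp{\varphi}$, then $\inp{\psi}$, then $\outp{\varphi}$, each exactly once, so every chain has length at most three, and \textbf{no passing wires} on $\varphi$ is precisely what guarantees the third step lands in $\outp{X}$ rather than re-entering $\inp{Y}$ (the loop pathology you cite arises only when the inner diagram violates that condition). For associativity, the paper proves commutativity of diagram (\ref{eqn:associativity}) abstractly, from the associativity and naturality of the codiagonal, via a string-diagram calculation valid in any category with coproducts; your ``endpoint of the maximal wire chain'' argument is the elementwise shadow of the same fact and would need a line checking that both bracketings actually compute that endpoint. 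The trade-off is real: your version is more vivid for a reader who thinks of wiring diagrams as pictures, while the paper's categorical formulation dualizes verbatim to the product-based diagram used to prove functoriality of $\mcG$ in Proposition~\ref{prop:G is W-alg}, which is presumably why the authors chose it.
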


\begin{proof}
We begin by establishing that $\bfW$ is indeed a category. We first show that our class of wiring diagrams is closed under composition. Let $\Phi=(X,Y,\varphi)$, $\Psi=(Y,Z,\psi)$, and $\Psi\circ\Phi=(X,Z,\omega)$.

To show that $\omega$ is a typed bijection, we replace the pair of maps $(\inp{\varphi},\outp{\varphi})$ with a pair of bijections $(\widetilde{\inp{\varphi}},\widetilde{\outp{\varphi}})$ as follows. Let $\expt{X}{\varphi}\ss\outp{X}$ (for \emph{exports}) denote the image of $\outp{\varphi}$, and $\loc{X}{\varphi}$ (for \emph{local ports}) be its complement. Then we can identify $\varphi$ with the following pair of typed bijections
\begin{displaymath}
   \left\{
     \begin{array}{lr}
       \widetilde{\inp{\varphi}}\taking \inp{X} \xrightarrow{\cong} \loc{X}{\varphi}+\inp{Y} \\
       \widetilde{\outp{\varphi}}\taking \outp{Y} \xrightarrow{\cong} \expt{X}{\varphi}
     \end{array}
   \right.
\end{displaymath}

 Similarly, identify $\psi$ with $(\widetilde{\inp{\psi}},\widetilde{\outp{\psi}})$. We can then rewrite the diagram defining $\omega$ in (\ref{dia:composition diagrams}) as one single commutative diagram of typed finite sets.
\[
\xymatrixcolsep{4pc}
\xymatrix{
\inp{X}+\outp{Z}
    \ar[d]_{\widetilde{\inp{\varphi}}+\widetilde{\outp{\psi}}} 
    \ar@{-->}[r]^{\omega}
&\outp{X}+\inp{Z} \\
\loc{X}{\varphi}+\inp{Y}+\expt{Y}{\psi}
    \ar[d]_{\bigid _{\loc{X}{\varphi}}+\widetilde{\inp{\psi}}+\bigid _{\expt{Y}{\psi}}}
&\loc{X}{\varphi}+\expt{X}{\varphi}+\inp{Z}
    \ar[u]_{\cong} \\
\loc{X}{\varphi}+\loc{Y}{\psi}+\inp{Z}+\expt{Y}{\psi}
    \ar[r]_-{\cong} & 
\loc{X}{\varphi}+\outp{Y}+\inp{Z}
    \ar[u]_{\bigid _{\loc{X}{\varphi}}+\widetilde{\outp{\varphi}}+\bigid _{\inp{Z}}}
}\]
As a composition of typed bijections, $\omega$ is also a typed bijection.

The following computation proves that $\omega$ has no passing wires:

\[\omega(\outp{Z})=\varphi\big(\psi(\outp{Z})\big)\subseteq\varphi(\outp{Y})\subseteq \outp{X}.\] 

Therefore $\bfW$ is closed under wiring diagram composition. To show that $\bfW$ is a category, it remains to prove that composition of wiring diagrams satisfies the unit and associativity axioms. The former is straightforward and will be omitted. We now establish the latter.

Consider the wiring diagrams $\Theta=(V,X,\theta),\Phi=(X,Y,\varphi),\Psi=(Y,Z,\psi)$; and let  $(\Psi\circ\Phi)\circ\Theta=(V,Z,\kappa)$ and $\Psi\circ(\Phi\circ\Theta)= (V,Z,\lambda)$. We readily see that $\outp{\kappa}=\outp{\lambda}$ by the associativity of composition in $\TFS{}$. Proving that $\inp{\kappa}=\inp{\lambda}$ is equivalent to establishing the commutativity of the following diagram:

\begin{equation} \label{eqn:associativity}
\xymatrix@C=31pt@R=1.8pc{
{} &\outp{V}+\inp{Z} &{} \\
{} &\outp{V}+\outp{V}+\inp{Z} \ar[u]^{\nabla+\bigid } &{} \\
\outp{V}+\outp{Y}+\inp{Z}  \ar[r]^-{\bigid +\outp{\varphi}+\bigid }
&\outp{V}+\outp{X}+\inp{Z}  \ar[u]^{\bigid + \outp{\theta}+\bigid } 
&\outp{V}+\outp{X}+\outp{X}+\inp{Z} \ar[l]_-{\bigid +\nabla+\bigid } \\
\outp{V}+\inp{Y} \ar[u]^{\bigid +\inp{\psi}} &{} &{} \\
\outp{V}+\outp{V}+\inp{Y} \ar[u]^{\nabla+\bigid } 
&\outp{V}+\outp{X}+\inp{Y} \ar[l]^-{\bigid + \outp{\theta}+\bigid } \ar[r]_-{\bigid +\bigid +\inp{\psi}}
&\outp{V}+\outp{X}+\outp{Y}+\inp{Z} \ar[uu]_{\bigid +\bigid +\outp{\varphi}+\bigid } \\
{} &\outp{V}+\inp{X} \ar[u]^{\bigid + \inp{\varphi}} &{} \\
{} &\inp{V} \ar[u]^{\inp{\theta}}&{}
}
\end{equation}

This diagram commutes in any category with coproducts, as follows from
the associativity and naturality of the codiagonal map. We present a formal argument of this fact below in the language
of string diagrams (See \cite{JSTensor}). As in \cite{Selinger}, we
let squares with blackened corners denote generic morphisms. We let
triangles denote codiagonal maps.  See Figure~\ref{string} below.

\begin{figure}[ht]\label{fig:string} 
\centering
\begin{tikzpicture} [scale=.8]
\node[mysquare] at (2,0) (Theta11) {$\theta^\text{out}$};
\node[fold] at (4.5,.35) (V1) {};
\node[mysquare] at (3.5,-1.15) (Psi) {$\psi^\text{in}$};
\node[mysquare] at (6,-.7) (Phi) {$\varphi^\text{out}$};
\node[mysquare] at (8.5,-.7) (Theta12) {$\theta^\text{out}$};
\node[fold] at (10.7,-.13) (V2) {};

\draw[onearrow={0.4}{$X^\text{out}$}] (.13,0) -- (Theta11.west);
\draw[onearrow={0.5}{$V^\text{out}$}] (Theta11.east) -- ([yshift=-10pt]V1.west);
\draw[onearrow={0.15}{$V^\text{out}$}] (.13,.75) -- ([yshift=11pt]V1.west);
\draw[onearrow={0.5}{$Y^\text{out}$}] ([yshift=12pt]Psi.east) -- ([yshift=-.5pt]Phi.west);
\draw[onearrow={0.96}{$Z^\text{in}$}] ([yshift=-11pt]Psi.east) -- (13,-1.5);
\draw[onearrow={0.5}{$X^\text{out}$}] (Phi.east) -- (Theta12.west);
\draw[onearrow={0.2}{$Y^\text{in}$}] (.13,-1.15) -- (Psi.west);
\draw[onearrow={0.5}{$V^\text{out}$}] (V1.east) -- ([yshift=12pt]V2.west);
\draw[onearrow={0.5}{$V^\text{out}$}] ([yshift=6pt]Theta12.east) -- ([yshift=-10pt]V2.west);
\draw[onearrow={0.7}{$V^\text{out}$}] (V2.east) -- (13,-.13);
\end{tikzpicture}
\begin{tikzpicture} [scale=.8]
\node[mysquare] at (6.5,0.5) (Theta11) {$\theta^\text{out}$};
\node[mysquare] at (2,-1.15) (Psi) {$\psi^\text{in}$};
\node[mysquare] at (4.2,-.7) (Phi) {$\varphi^\text{out}$};
\node[mysquare] at (6.5,-.7) (Theta12) {$\theta^\text{out}$};
\node[fold] at (8.7,.8) (V1) {};
\node[fold] at (11.1,.15) (V2) {};

\draw[onearrow={0.5}{$V^\text{out}$}] ([yshift=-1.7pt]Theta11.east) -- ([yshift=-10pt]V1.west);
\draw[onearrow={0.5}{$Y^\text{out}$}] ([yshift=12pt]Psi.east) -- ([yshift=-.5pt]Phi.west);
\draw[onearrow={0.96}{$Z^\text{in}$}] ([yshift=-11pt]Psi.east) -- (13,-1.5);
\draw[onearrow={0.5}{$X^\text{out}$}] (Phi.east) -- (Theta12.west);
\draw[onearrow={.35}{$Y^\text{in}$}] (0,-1.15) -- (Psi.west);
\draw[onearrow={0.088}{$X^\text{out}$}] (0,.5) -- (Theta11.west);
\draw[onearrow={0.5}{$V^\text{out}$}] ([yshift=12pt]Theta12.east) -- ([yshift=-12pt]V2.west);
\draw[onearrow={0.5}{$V^\text{out}$},rounded corners=20pt] (V1.east) -- ([yshift=12pt]V2.west);
\draw[onearrow={0.065}{$V^\text{out}$}]
(0,1.2) -- ([yshift=12pt]V1.west);
\draw[onearrow={0.5}{$V^\text{out}$}] (V2.east) -- (13,.15);
\end{tikzpicture}
\begin{tikzpicture} [scale=.8]
\node[mysquare] at (6.5,0.5) (Theta11) {$\theta^\text{out}$};
\node[mysquare] at (2,-1.15) (Psi) {$\psi^\text{in}$};
\node[mysquare] at (4.2,-.7) (Phi) {$\varphi^\text{out}$};
\node[mysquare] at (6.5,-.7) (Theta12) {$\theta^\text{out}$};
\node[fold] at (8.7,-.13) (V1) {};
\node[fold] at (11.1,.3) (V2) {};

\draw[onearrow={0.5}{$V^\text{out}$}] ([yshift=-6.5pt]Theta11.east) -- ([yshift=11pt]V1.west);
\draw[onearrow={0.5}{$Y^\text{out}$}] ([yshift=12pt]Psi.east) -- ([yshift=-.5pt]Phi.west);
\draw[onearrow={0.96}{$Z^\text{in}$}] ([yshift=-11pt]Psi.east) -- (13,-1.5);
\draw[onearrow={0.5}{$X^\text{out}$}] (Phi.east) -- (Theta12.west);
\draw[onearrow={.35}{$Y^\text{in}$}] (0,-1.15) -- (Psi.west);
\draw[onearrow={0.088}{$X^\text{out}$}] (0,.5) -- (Theta11.west);
\draw[onearrow={0.5}{$V^\text{out}$}] ([yshift=6pt]Theta12.east) -- ([yshift=-10pt]V1.west);
\draw[onearrow={0.5}{$V^\text{out}$}] (V1.east) -- ([yshift=-12pt]V2.west);
\draw[onearrow={0.05}{$V^\text{out}$},rounded corners=20pt]
(0,1.2) -- (8,1.2) -- ([yshift=12pt]V2.west);
\draw[onearrow={0.5}{$V^\text{out}$}] (V2.east) -- (13,.3);
\end{tikzpicture}
\begin{tikzpicture} [scale=.8]
\node[mysquare] at (9.2,-0.3) (Theta) {$\theta^\text{out}$};
\node[mysquare] at (2,-1.15) (Psi) {$\psi^\text{in}$};
\node[mysquare] at (4.2,-.7) (Phi) {$\varphi^\text{out}$};
\node[fold] at (6.5,-.3) (V1) {};
\node[fold] at (11.1,.3) (V2) {};

\draw[onearrow={0.5}{$Y^\text{out}$}] ([yshift=12pt]Psi.east) -- ([yshift=-.6pt]Phi.west);
\draw[onearrow={0.96}{$Z^\text{in}$}] ([yshift=-11pt]Psi.east) -- (13,-1.5);
\draw[onearrow={0.55}{$X^\text{out}$}] (Phi.east) -- ([yshift=-11.pt]V1.west);
\draw[onearrow={.35}{$Y^\text{in}$}] (0,-1.15) -- (Psi.west);
\draw[onearrow={0.088}{$X^\text{out}$}] (0,.1) -- ([yshift=12pt]V1.west);
\draw[onearrow={0.5}{$X^\text{out}$}] (V1.east) -- (Theta.west);
\draw[onearrow={0.5}{$V^\text{out}$}] ([yshift=6pt]Theta.east) -- ([yshift=-10pt]V2.west);
\draw[onearrow={0.05}{$V^\text{out}$}]
(0,.75) -- ([yshift=12pt]V2.west);
\draw[onearrow={0.5}{$V^\text{out}$}] (V2.east) -- (13,.3);
\end{tikzpicture}
\caption{String diagram proof of commutativity of \eqref{eqn:associativity} }
\label{string}
\end{figure}

The first step of the proof follows from the topological nature of string diagrams, which mirror the axioms of monoidal categories. The second step invokes the associativity of codiagonal maps. The third and final step follows from the naturality of codiagonal maps, i.e., the commutativity of the following diagram.

\[\xymatrix{\outp{V}+\outp{V} \ar[r]^-{\nabla} \ar[d]_{\outp{\theta}+\outp{\theta}} & \outp{V} \ar[d]^{\outp{\theta}} \\
\outp{X}+\outp{X} \ar[r]^-{\nabla} & \outp{X}
}\]

Now that we have shown that $\bfW$ is a category, we show that $(\oplus,0)$ is a monoidal structure on $\bfW$. Let $X,X',X''\in\Ob\bfW$ be boxes. We readily observe the following canonical isomorphisms. 
\begin{align*}
&X\oplus 0= X= 0\oplus X &\emph{(unity)}\\
&(X\oplus X')\oplus X''= X\oplus (X'\oplus X'')&\emph{(associativity)}\\
&X\oplus X'= X'\oplus X &\emph{(commutativity)}
\end{align*}
Hence the monoidal product $\oplus$ is well behaved on objects. It is similarly easy, and hence will be omitted, to show that $\oplus$ is functorial. This completes the proof that $(\bfW,\oplus,0)$ is a symmetric monoidal category.
\end{proof}

Having established that $(\bfW,\oplus,0)$ is an SMC, we can now speak about the operad $\Opd{\bfW}$ of wiring diagrams. In particular, we can draw operadic pictures, such as the one in our motivating example in Figure~\ref{fig:pipebrine}, to which we now return.

\begin{ex}\label{ex:wiring explained}
Figure~\ref{fig:WD} depicts an $\Opd{\bfW}$ wiring diagram $\Phi\taking X_1,X_2\to Y$, which we may formally denote by the tuple $\Phi=(X_1,X_2;Y;\varphi)$. Reading directly from Figure~\ref{fig:WD}, we have the boxes:

\begin{align*}
X_1&=\big(\{\inp{X}_{1a},\inp{X}_{1b}\},\{\outp{X}_{1a}\}\big) \\
X_2&=\big(\{\inp{X}_{2a},\inp{X}_{2b}\} ,\{\outp{X}_{2a},\outp{X}_{2b}\}\big) \\
Y&=\big(\{\inp{Y}_a,\inp{Y}_b\},\{\outp{Y}_a\}\big)
\end{align*}

The wiring diagram $\Phi$ is visualized by nesting the domain boxes $X_1,X_2$ within the codomain box $Y$, and drawing the wires prescribed by $\varphi$, as recorded below in Table~\ref{tab:explicit}.

\noindent\begin{minipage}{\linewidth}
\[
\begin{array}{c||c|c|c|c|c}
\rule[-4pt]{0pt}{16pt}
w\in\inp{X}+\outp{Y}&\inp{X}_{1a}&\inp{X}_{1b}&\inp{X}_{2a}&\inp{X}_{2b}&\outp{Y}_{a}
\\\hline
\rule[-4pt]{0pt}{16pt}
\varphi(w)\in\outp{X}+\inp{Y}&\inp{Y}_{b}&\outp{X}_{2b}&\inp{Y}_{a}&\outp{X}_{1a}&\outp{X}_{2a}
\end{array}
\]
\smallskip
\captionof{table}{} \label{tab:explicit} 
\end{minipage}

\vspace{-3 mm}

\begin{figure}[ht]
\activetikz{
	\path(0,0);
	\blackbox{(10,5)}{2}{1}{$Y$}{.7}
	        \node at (.4,3.6) {\small $\inp{Y}_{a}$};
	        \node at (.4,1.9) {\small $\inp{Y}_{b}$};
	        \node at (9.6,2.75) {\small $\outp{Y}_a$};
	\path(2,1.5);
	\blackbox{(2,2)}{2}{1}{$X_1$}{.5}
	        \node at (1.7,3.06) {\small $\inp{X}_{1a}$};
	        \node at (1.7,2.4) {\small $\inp{X}_{1b}$};
	        \node at (4.38,2.7) {\small $\outp{X}_{1a}$};
	\path(6,1.5);
	\blackbox{(2,2)}{2}{2}{$X_2$}{.5};
	        \node at (5.7,3.07) {\small $\inp{X}_{2a}$};
	        \node at (5.7,2.4) {\small $\inp{X}_{2b}$};
	        \node at (8.37,3.07) {\small $\outp{X}_{2a}$};
	        \node at (8.37,2.37) {\small $\outp{X}_{2b}$};
	\directarc{(4.25,2.5)}{(5.75,2.16667)} 
	    \node at (5,2) {};
	\directarc{(0.35,1.6667)}{(1.75,2.83333)} 
	    \node at (.7,1.4) {};
	    \node at (.7,1.2) {};
	\fancyarc{(0.35,3.3333)}{(5.75,2.83333)}{-40}{25} 
	    \node at (.6,3.1) {};
	    \node at (.6,2.9) {};
	\directarc{(8.25,2.8333)}{(9.65,2.5)} 
	    \node at (9.5,2.3) {};
	    \node at (9.5,2.1) {};
	\fancyarc{(1.75,2.16667)}{(8.25,2.16667)}{20}{-45} 
}
\caption{A wiring diagram $\Phi\taking X_1,X_2\to Y$ in $\Opd{\bfW}$.}
\label{fig:WD}
\end{figure}

To reconceptualize $\Phi\taking X_1,X_2\to Y$ as a wiring diagram in $\bfW$, we simply consider the tensor $\Phi\taking X_1\oplus X_2\to Y$, as given in Figure~\ref{fig:reconcept} below. This demonstrates the fact that operadic pictures are easier to read and hence are more illuminating.

\begin{figure}[ht]
\activetikz{
	\path(0,0);
	\blackbox{(7,8)}{2}{1}{$Y$}{.7}
	    \node at (0.35,5.666) {\small $\inp{Y_a}$};
	    \node at (0.35,3) {\small$\inp{Y_b}$};
	    \node at (6.6,4.233) {\small$\outp{Y_a}$};
	\path(2,1.5);
	\blackbox{(3,5)}{4}{3}{\small$X_1\oplus X_2$}{.5}
        \node at (1.7,5.8) {\small$\inp{X_{1a}}$};
	    \node at (1.7,4.8) {\small$\inp{X_{1b}}$};
	    \node at (1.7,3.8) {\small$\inp{X_{2a}}$};
	    \node at (1.7,2.8) {\small$\inp{X_{2b}}$};
	    \node at (5.4,5.55) {\small$\outp{X_{1a}}$};
	    \node at (5.4,4.3) {\small$\outp{X_{2a}}$};
	    \node at (5.4,3.05) {\small$\outp{X_{2b}}$};
    \directarc{(0.35,5.333)}{(1.75,3.5)}
    \directarc{(0.35,2.666)}{(1.75,5.5)}
    \directarc{(5.25,4)}{(6.65,4)}
    \fancyarc{(1.75,4.5)}{(5.25,2.75)}{40}{-80}
    \fancyarc{(1.75,2.5)}{(5.25,5.25)}{35}{95}
}
\caption{A wiring diagram $\Phi\taking X_1\oplus X_2\to Y$ in $\bfW$ corresponding to the $\Opd{\bfW}$ wiring diagram $\Phi:X_1,X_2\to Y$ of Figure~\ref{fig:WD}.}
\label{fig:reconcept}
\end{figure}
\end{ex}

The following remark explains that our pictures of wiring diagrams are not completely ad hoc---they are depictions of 1-dimensional oriented manifolds with boundary. The boxes in our diagrams simply tie together the positively and negatively oriented components of an individual oriented 0-manifold.

\begin{rem}\label{rem:cobordism} For any set $S$, let $\operatorname{1--\bf Cob}/S$ denote the symmetric monoidal category of oriented 0-manifolds over $S$ and the 1-dimensional cobordisms between them. We call its objects \emph{oriented $S$-typed 0-manifolds}. Recall that $\bfW=\bfW_{\Man}$ is our category of $\Man$-typed wiring diagrams; let ${\mathbf M}:=\Ob\Man$ denote the set of manifolds (see Remark~\ref{rem:default}). There is a faithful,  essentially surjective, strong monoidal functor 
\[\bfW\to \operatorname{1--\bf Cob}/{\mathbf M},\] 
sending a box $(\inp{X},\outp{X})$ to the oriented ${\mathbf M}$-typed 0-manifold $\inp{X}+\outp{X}$ where $\inp{X}$ is oriented positively and $\outp{X}$ negatively. Under this functor, a wiring diagram $\Phi=(X,Y,\varphi)$ is sent to a 1-dimensional cobordism that has no closed loops. A connected component of such a cobordism can be identified with either its left or right endpoint, which correspond to the domain or codomain of the bijection \mbox{$\varphi\taking\inp{X}+\outp{Y}\To{\iso}\outp{X}+\inp{Y}$}. See \cite{SpivakSchultzRupel}.

In fact, with the {\em no passing wires} condition on morphisms (cobordisms) $X\to Y$ (see Definition \ref{def:W}), the subcategory $\bfW\ss\operatorname{1--\bf Cob}/{\mathbf M}$ is the left class of an orthogonal factorization system. See \cite{Abadi}.
\end{rem}

Let $\Phi=(X,Y,\varphi)$ be a wiring diagram. Applying the dependent product functor (see Definition~\ref{def:depprod}) to $\varphi$, we obtain
a diffeomorphism of manifolds 
\begin{equation}\label{eqn:prodwd}\overline{\varphi}\taking \overline{\outp{X}}\times\overline{\inp{Y}}\to \overline{\inp{X}}\times\overline{\outp{Y}}.\end{equation}
Equivalently, if $\varphi$ is represented by the pair $(\inp{\varphi},\outp{\varphi})$, as in Definition~\ref{def:W}, we can express $\ol{\varphi}$ in terms of its pair of component maps:
\begin{displaymath}
   \left\{
     \begin{array}{lr}
       \overline{\inp{\varphi}}\taking  \overline{\outp{X}}\times\overline{\inp{Y}}\to\overline{\inp{X}} \\
       \overline{\outp{\varphi}}\taking \overline{\outp{X}}\to\overline{\outp{Y}}
     \end{array}
   \right.
\end{displaymath}

It will also be useful to apply the dependent product functor to the commutative diagrams in (\ref{dia:composition diagrams}), which define wiring diagram composition. Note that, by the contravariance of the dependent product, the codiagonal $\nabla\taking \outp{X}+\outp{X}\to \outp{X}$ gets sent to the diagonal map $\Delta\taking \overline{\outp{X}}\to\overline{\outp{X}}\times\overline{\outp{X}}$. Thus we have the following commutative diagrams:
\begin{align}\label{dia:dep prod of wd}
\xymatrixcolsep{3.5pc}
\xymatrix{
\overline{\outp{X}}\times\overline{\inp{Z}}
    \ar[r]^{\overline{\inp{\omega}}}
    \ar[d]_{\Delta\times\bigid }
&\overline{\inp{X}} 
\\
\overline{\outp{X}}\times\overline{\outp{X}}\times\overline{\inp{Z}}
    \ar[d]_{\bigid \times\overline{\outp{\varphi}}\times\bigid }
&{}    
\\
\overline{\outp{X}}\times\overline{\outp{Y}}\times\overline{\inp{Z}}
    \ar[r]_-{\bigid \times\overline{\inp{\psi}}}
&\overline{\outp{X}}\times\overline{\inp{Y}}
    \ar[uu]_{\overline{\inp{\varphi}}}
} \mskip15mu
\xymatrixcolsep{1.5pc}
\xymatrix{
\overline{\outp{X}}
    \ar[rd]_{\overline{\outp{\varphi}}} 
    \ar[rr]^{\overline{\outp{\omega}}}
&
&\overline{\outp{Z}}\\
&\overline{\outp{Y}} 
    \ar[ru]_{\overline{\outp{\psi}}}}
\end{align}

\section{The Algebra of Open Systems}
\label{sec:g}

In this section we define an algebra $\mcG\taking(\bfW,\oplus,0)\to(\mathbf{Set},\times,\star)$ (see Definition~\ref{def:algebra}) of general open dynamical systems. A $\bfW$-algebra can be thought of as a choice of semantics for the syntax of $\bfW$, i.e., a set of possible meanings for boxes and wiring diagrams. As in Definition~\ref{def:SMC to Opd}, we may use this to construct the corresponding operad algebra $\Opd{\mcG }:\Opd{\bfW}\to\mathbf{Sets}$. Before we define $\mcG$, we revisit Example~\ref{ex:main} for inspiration.

\begin{ex} \label{ex:promise} As the textbook exercise \cite[Problem 7.21]{BD} prompts, let's begin by writing down the system of equations that governs the amount of salt $Q_i$ within the tanks $X_i$. This can be done by using dimensional analysis for each port of $X_i$ to find the the rate of salt being carried in ounces per minute, and then equating the rate $\dot{Q}_i$ to the sum across these rates for $\inp{X}_i$ ports minus $\outp{X}_i$ ports.

\begin{align*}
\dot{Q}_1\frac{\text{oz}}{\text{min}}&=
-\left(\frac{Q_1 \text{oz}}{30 \text{gal}}\cdot\frac{3 \text{gal}}{\text{min}}\right)
+\left(\frac{Q_2\text{oz}}{20\text{gal}}\cdot\frac{1.5\text{gal}}{\text{min}}\right)
+\left(\frac{1\text{oz}}{\text{gal}}\cdot\frac{1.5\text{gal}}{\text{min}}\right) \\
\dot{Q}_2\frac{\text{oz}}{\text{min}}&=
-\left(\frac{Q_2 \text{oz}}{20 \text{gal}}\cdot\frac{(1.5+2.5) \text{gal}}{\text{min}}\right)
+\left(\frac{Q_1\text{oz}}{30\text{gal}}\cdot\frac{3\text{gal}}{\text{min}}\right)
+\left(\frac{3\text{oz}}{\text{gal}}\cdot\frac{1\text{gal}}{\text{min}}\right)
\end{align*}

Dropping the physical units, we are left with the following system of ODEs:

\begin{equation}\label{eqn:naive}
   \left\{
     \begin{array}{lr}
       \dot{Q}_1=-.1Q_1+.075Q_2+1.5 \\
       \dot{Q}_2=.1Q_1-.2Q_2+3
     \end{array}
   \right.
\end{equation}
\end{ex}

The derivations for the equations in (\ref{eqn:naive}) involved a hidden step in which the connection pattern in Figure~\ref{fig:pipebrine}, or equivalently Figure~\ref{fig:WD}, was used. Our wiring diagram approach explains this step and makes it explicit. Each box in a wiring diagram should only ``know'' about its own inputs and outputs, and not how they are connected to others. That is, we can only define a system on $X_i$ by expressing $\dot{Q}_i$ just in terms of $Q_i$ and $\inp{X}_i$---this is precisely the data of an open system (see Definition~\ref{def:opensystem}). We now define our algebra $\mcG$, which assigns a set of open systems to a box. Given a wiring diagram and an open system on its domain box, it also gives a functorial procedure for assigning an open system to the codomain box. We will then use this new machinery to further revisit Example~\ref{ex:promise} in Example~\ref{ex:as promised}. 

\begin{defn}\label{def:general algebra} We define $\mcG:(\bfW,\oplus,0)\to(\Set,\times,\star)$ as follows. Let $X\in\Ob\bfW$. The \emph{set of open systems on $X$}, denoted $\mcG(X)$, is defined as
\[\mcG (X)=\{(S,f)\; |\;S\in\Ob\TFS{},(\overline{S},\overline{\inp{X}},\overline{\outp{X}},f)\in\Ob\ODS \}.\] 
We call $S$ the set of \emph{state variables} and its dependent product $\overline{S}$ the \emph{state space}.

Let $\Phi=(X,Y,\varphi)$ be a wiring diagram. Then $\mcG (\Phi)\taking \mcG (X)\to\mcG (Y)$ is given by $(S,f)\mapsto (\mcG (\Phi)S,\mcG (\Phi)f)$, where $\mcG (\Phi)S=S$ and $g=\mcG (\Phi)f\taking \overline{S}\times\overline{\inp{Y}}\to T\overline{S}\times\overline{\outp{Y}}$ is defined by the dashed arrows $(\inp{g},\outp{g})$ (see Definition~\ref{def:opensystem}) that make the diagrams below commute:
\begin{equation}
\xymatrixcolsep{3.5pc}
\xymatrix{
\overline{S}\times\overline{\inp{Y}}
    \ar[d]_{\Delta\times\bigid _{\overline{\inp{Y}}}} 
    \ar@{-->}[r]^-{\inp{g}} 
&T\overline{S}  
\\ 
\overline{S}\times\overline{S}\times\overline{\inp{Y}}
    \ar[d]_{\bigid _{\overline{S}}\times \outp{f}\times\bigid _{\overline{\inp{Y}}}}
& {}
\\
\overline{S}\times\overline{\outp{X}}\times\overline{\inp{Y}}
    \ar[r]_-{\bigid _{\overline{S}}\times\overline{\inp{\varphi}}}
&\overline{S}\times\overline{\inp{X}}
    \ar[uu]_{\inp{f}}
} \hspace{7 mm} \xymatrixcolsep{2.5pc}
\xymatrix{
\overline{S} 
    \ar[rd]_{\outp{f}} 
    \ar@{-->}[rr]^{\outp{g}}
&
&\overline{\outp{Y}}\\
&\overline{\outp{X}} 
    \ar[ru]_{\overline{\outp{\varphi}}}}
\label{eqn:G of wd}
\end{equation} One may note strong resemblance between the diagrams in (\ref{eqn:G of wd}) and those in (\ref{dia:composition diagrams}).

We give $\mcG $ a lax monoidal structure: for any pair $X,X'\in\bfW$ we have a coherence map  $\mu_{X,X'}:\mcG (X)\times\mcG (X')\to\mcG (X\oplus X')$ given by \[\big((S,f),(S',f')\big)\mapsto (S+S',f\times f'),\] where $f\times f'$ is as in Lemma~\ref{def:osprod}.
\label{def:mu}
\end{defn}

\begin{rem} Recall from Remark \ref{rem:default} that $\Man$ is small, so the collection $\mcG(X)$ of open systems on $X$ is indeed a set.
\end{rem}

%

\begin{rem} One may also encode an initial condition in $\mcG $ by using $\Man_*$ instead of $\Man$ in Remark~\ref{rem:default} as the default choice of finite product category, where $\Man_*$ is the category of pointed smooth manifolds and base point preserving smooth maps. The base point represents the initialization of the state variables.
\end{rem}


We now establish that $\mcG$ is indeed an algebra.

\begin{prop}\label{prop:G is W-alg}
The pair $(\mcG,\mu)$ of Definition~\ref{def:general algebra} is a lax monoidal functor, i.e., $\mcG$ is a $\bfW$-algebra.
\end{prop}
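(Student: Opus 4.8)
The plan is to verify the three conditions defining a lax monoidal functor: (i) $\mcG$ is a functor $\bfW\to\Set$, meaning it preserves identities and composition of wiring diagrams; (ii) the coherence maps $\mu_{X,X'}$ are natural in both arguments; and (iii) the coherence maps satisfy the associativity and unit axioms of a lax monoidal functor. All three amount to diagram chases in the cartesian category $\Man$, using the explicit description of $\mcG(\Phi)$ by the commutative diagrams in \eqref{eqn:G of wd} together with the description of wiring diagram composition in \eqref{dia:composition diagrams} (or rather its image under the dependent product functor, \eqref{dia:dep prod of wd}).

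First I would check functoriality on identities: for $\iota\taking X\to X$ we have $\inp{\varphi}$ the coprojection $\inp{X}\to\outp{X}+\inp{X}$ and $\outp{\varphi}$ empty, so $\ol{\inp{\varphi}}$ is a projection and $\ol{\outp{\varphi}}$ is an isomorphism; plugging into \eqref{eqn:G of wd} and using $\pi\circ\Delta=\id$ collapses the left diagram to $\inp{g}=\inp{f}$ and the right to $\outp{g}=\outp{f}$, so $\mcG(\iota)=\id_{\mcG(X)}$. For composition, given $\Phi=(X,Y,\varphi)$ and $\Psi=(Y,Z,\psi)$ with composite $\omega$, I must show $\mcG(\Psi)\circ\mcG(\Phi)=\mcG(\Psi\circ\Phi)$ as maps $\mcG(X)\to\mcG(Z)$. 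On state sets this is trivial ($S\mapsto S\mapsto S$). On the open system $f$, I would write out the readout part first: $\outp{(\mcG(\Psi)\mcG(\Phi)f)}=\ol{\outp{\psi}}\circ\ol{\outp{\varphi}}\circ\outp{f}=\ol{\outp{\varphi}\circ\outp{\psi}}\circ\outp{f}$, wait—careful with variance—$=\ol{\outp{\psi}}\circ\ol{\outp{\varphi}}\circ\outp{f}$, and this equals $\ol{\outp{\omega}}\circ\outp{f}$ by the right-hand diagram of \eqref{dia:dep prod of wd}. For the differential-equation part, I would stack the left-hand diagram of \eqref{eqn:G of wd} for $\Psi$ on top of the one for $\Phi$ and show the resulting composite factors as the single diagram for $\omega$; the key inputs are the naturality of the diagonal $\Delta$ (so that $(\Delta\times\id)\circ(\Delta\times\id)$ rearranges into a threefold diagonal), the fact that $\ol{\cdot}$ is strong monoidal (Lemma after Definition~\ref{def:depprod}), and the dependent-product image \eqref{dia:dep prod of wd} of the composition square for $\inp{\omega}$.

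Next I would handle naturality of $\mu$: given wiring diagrams $\Phi\taking X\to Y$ and $\Phi'\taking X'\to Y'$, I must check $\mcG(\Phi\oplus\Phi')\circ\mu_{X,X'}=\mu_{Y,Y'}\circ(\mcG(\Phi)\times\mcG(\Phi'))$. Since $\Phi\oplus\Phi'=\Phi+\Phi'$ in $\TFS{}$ and $\ol{\cdot}$ turns the coproduct $\inp{\varphi}+\inp{\varphi'}$ into the product $\ol{\inp{\varphi}}\times\ol{\inp{\varphi'}}$ (again strong monoidality), both sides produce the open system whose state set is $S+S'$ and whose dynamics is obtained by applying the construction \eqref{eqn:G of wd} componentwise to $f$ and $f'$; this is essentially bookkeeping once one notes $T(\ol{S}\times\ol{S'})\iso T\ol{S}\times T\ol{S'}$ and $\Delta_{\ol S\times\ol{S'}}=\Delta_{\ol S}\times\Delta_{\ol{S'}}$ up to the symmetry swapping middle factors. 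Finally, the lax monoidal coherence axioms: associativity requires that the two ways of composing $\mu$ to get $\mcG(X)\times\mcG(X')\times\mcG(X'')\to\mcG(X\oplus X'\oplus X'')$ agree, which reduces to associativity of $+$ on state sets and associativity of the product of open systems from Lemma~\ref{def:osprod}; the unit axiom is immediate since $\mcG(0)=\{(\varnothing,\text{unique map})\}$ is a one-element set (the terminal object $\star$), matching the monoidal unit of $\Set$.

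The main obstacle is the composition-preservation check, specifically matching the stacked pair of left-hand diagrams in \eqref{eqn:G of wd} against the single diagram for $\inp{\omega}$. The subtlety is that the naive composite has two diagonal maps $\Delta$ appearing at different stages (one from the $\Psi$-step acting on $\ol S$, one from the $\Phi$-step), and one must reorganize these — using naturality of $\Delta$ and the interchange of $\outp{f}$ with the copies of $\ol S$ — into the configuration matching the codiagonal-turned-diagonal $\Delta\times\id$ that appears in \eqref{dia:dep prod of wd}. Once one commits to drawing the full pasting diagram this is forced, but it is the one step where a careless treatment of which $\ol{\outp{X}}$-factor each arrow hits can go wrong; the cleanest route is to invoke the already-verified associativity/naturality of the (co)diagonal exactly as in the string-diagram argument used for associativity of composition in $\bfW$ in Proposition~\ref{prop:W is SMC}, now interpreted in $\Man$ via the contravariant $\ol{\cdot}$, with the generic-morphism boxes relabeled by $\outp{f}$, $\ol{\inp{\varphi}}$, $\ol{\inp{\psi}}$, $\ol{\outp{\varphi}}$.
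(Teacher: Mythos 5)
Your proposal is correct and follows essentially the same route as the paper: the paper likewise concentrates on the composition-preservation check, observes that the relevant pasted diagram is dual to the associativity diagram in the proof of Proposition~\ref{prop:W is SMC} (so the same (co)diagonal naturality/associativity argument applies after passing through the contravariant $\ol{\;\cdot\;}$), and dismisses the coherence of $\mu$ as routine. The one detail the paper makes explicit is that the middle square of the pasted diagram does not commute on its own but is equalized by the preceding diagonal-and-readout maps (checked there by a short element chase landing in $\ol{S}\times\ol{\outp{X}}\times\ol{\inp{Y}}$), which is exactly the subtlety you flagged about tracking which $\ol{\outp{X}}$-factor each arrow hits.
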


\begin{proof}
Let $\Phi=(X,Y,\varphi)$ and $\Psi=(Y,Z,\psi)$ be wiring diagrams in $\bfW$. To show that $\mcG$ is a functor, we must have that $\mcG (\Psi\circ\Phi)=\mcG (\Psi)\circ\mcG (\Phi)$. Immediately we have $\mcG (\Psi\circ\Phi)S=S=\mcG (\Psi)(\mcG (\Phi)S)$. 

Now let \mbox{$h:=\mcG (\Psi\circ\Phi)f$} and $k:=\mcG (\Psi)(\mcG (\Phi)f)$. It suffices to show $h=k$, or equivalently $(\inp{h},\outp{h})=(\inp{k},\outp{k})$. One readily sees that $\outp{h}=\outp{k}$. We use (\ref{dia:dep prod of wd}) and (\ref{eqn:G of wd}) to produce the following diagram; showing it commutes is equivalent to proving that that $\inp{h}=\inp{k}$.
\begin{equation}\label{eqn:algebracomp}
\xymatrixcolsep{3pc}
\xymatrix{
{} &\overline{S}\times\overline{\inp{Z}}\ar[d]^{\Delta\times\bigid } &{} \\
{} &\overline{S}\times\overline{S}\times\overline{\inp{Z}} \ar[d]^{\bigid \times \outp{f}\times\bigid } &{} \\
\overline{S}\times\overline{\outp{Y}}\times\overline{\inp{Z}} \ar[d]_{\bigid \times\overline{\inp{\psi}}} 
&\overline{S}\times\overline{\outp{X}}\times\overline{\inp{Z}} \ar[l]_-{\bigid \times\overline{\outp{\varphi}}\times\bigid } \ar[r]^-{\bigid \times\Delta\times\bigid }
&\overline{S}\times\overline{\outp{X}}\times\overline{\outp{X}}\times\overline{\inp{Z}} \ar[dd]^{\bigid \times\bigid \times\overline{\outp{\varphi}}\times\bigid } \\
\overline{S}\times\overline{\inp{Y}} \ar[d]_{\Delta\times\bigid } &{} &{} \\
\overline{S}\times\overline{S}\times\overline{\inp{Y}} \ar[r]_-{\bigid \times \outp{f}\times\bigid }
&\overline{S}\times\overline{\outp{X}}\times\overline{\inp{Y}} \ar[d]^{\bigid \times\overline{\inp{\varphi}}}
&\overline{S}\times\overline{\outp{X}}\times\overline{\outp{Y}}\times\overline{\inp{Z}} \ar[l]^-{\bigid \times\bigid \times\overline{\inp{\psi}}}\\
{} &\overline{S}\times\overline{\inp{X}}\ar[d]^{\inp{f}} &{} \\
{} &T\overline{S} &{}
}
\end{equation}

The commutativity of this diagram, which is dual to the one for associativity in (\ref{eqn:associativity}), holds in an arbitrary category with products. Although the middle square fails to commute by itself, the composite of the first two maps equalizes it; that is, the two composite morphisms \mbox{$\overline{S}\times\overline{\inp{Z}}\to\overline{S}\times \overline{\outp{X}}\times \overline{\inp{Y}}$} agree.

Since we proved the analogous result via string diagrams in the proof of Proposition \ref{prop:W is SMC}, we show it concretely using elements this time. Let $(s,z)\in\ol{S}\times\ol{\inp{Z}}$ be an arbitrary element. Composing six morphisms $\ol{S}\times\ol{\inp{Z}}\too\ol{S}\times\ol{\outp{X}}\times\ol{\inp{Y}}$ through the left of the diagram gives the same answer as composing through the right; namely,
$$\Big(s,\outp{f}(s),\inp{\psi}\big(\outp{\varphi}\circ\outp{f}(s),z\big)\Big)\in\ol{S}\times\ol{\outp{X}}\times\ol{\inp{Y}}.$$

Since the diagram commutes, we have shown that $\mcG$ is a functor. To prove that the pair $(\mcG,\mu)$ constitutes a lax monoidal functor $\bfW\to\mathbf{Set}$, i.e., a $\bfW$-algebra, we must establish coherence. Since $\mu$ simply consists of a coproduct and a product, this is straightforward and will be omitted.
\end{proof}

As established in Definition~\ref{def:SMC to Opd}, the coherence map $\mu$ allows us to define the operad algebra $\Opd{\mcG}$ from $\mcG$. This finally provides the formal setting to consider open dynamical systems over operadic wiring diagrams, such as our motivating one in Figure~\ref{fig:pipebrine}. We note that, in contrast to the trivial equality $\mcG(\Phi)S=S$ found in Definition~\ref{def:general algebra}, in the operadic setting we have \[\Opd{\mcG}(\Phi)(S_1,\ldots,S_n)=\amalg_{i=1}^n S_i.\] This simply means that the set of state variables of the larger box $Y$ is the disjoint union of the state variables of its constituent boxes $X_i$. Now that we have the tools to revisit Example~\ref{ex:promise}, we do so in the following section, but first we will define the subalgebra $\mcL$ to which it belongs---that of linear open systems.

\section{The Subalgebra of Linear Open Systems}
\label{sec:l}


In this section, we define the algebra $\mcL\taking\bfW_{\bfL}\to\Set$, which encodes linear open systems. Here $\bfW_{\bfL}$ is the category of $\bfL$-typed wiring diagrams, as in Remark \ref{rem:different Cs}. Of course, one can use Definition~\ref{def:SMC to Opd} to construct an operad algebra $\Opd{\mcL }:\Opd{\bfW_{\bfL}}\to\mathbf{Sets}$. 

Before we give a formal definition for $\mcL$, we first provide an alternative description for linear open systems and wiring diagrams in $\bfW_{\bfL}$. The category $\bfL$ enjoys special properties---in particular it is an additive category, as seen by the fact that there is an equivalence of categories $\bfL\cong \mathbf{Vect}_\RR$. Specifically, finite products and finite coproducts are isomorphic. Hence a morphism \mbox{$f:A_1\times A_2\to B_1\times B_2$} in  $\bfL$ canonically decomposes into a matrix equation \[\begin{bmatrix} a_1 \\ a_2 \end{bmatrix} \mapsto \begin{bmatrix} b_1 \\ b_2 \end{bmatrix} = \begin{bmatrix} f^{1,1} & f^{1,2} \\ f^{2,1} & f^{2,2} \end{bmatrix} \begin{bmatrix} a_1 \\ a_2 \end{bmatrix}\] This matrix is naturally equivalent to the whole map $f$ by universal properties. We use these to rewrite our relevant $\bfL$ maps in Definitions~\ref{def:rewrite} and \ref{def:rewrite2} below.

\begin{defn} \label{def:rewrite}
Suppose that $(M,\inp{U},\outp{U},f)$ is a linear open system and hence $f:M\times\inp{U}\to TM\times \outp{U}$. Then $f$ decomposes into the four linear maps:
\begin{align*}
f^{M,M}&\taking M\to TM & f^{M,U}&\taking \inp{U}\to TM \\ f^{U,M}&\taking M\to\outp{U} & f^{U,U}&\taking\inp{U}\to\outp{U}
\end{align*} By Definition~\ref{def:opensystem}, we know $f^{U,U}=0$. If we let $(m,\inp{u},\outp{u})\in M\times\inp{U}\times\outp{U}$, these equations can be organized into a single matrix equation
\begin{equation}\label{eqn:matrixform} \begin{bmatrix}\dot{m} \\ \outp{u} \end{bmatrix}=\begin{bmatrix} f^{M,M} & f^{M,U} \\ f^{U,M} & 0 \end{bmatrix}\begin{bmatrix} m \\ \inp{u} \end{bmatrix}
\end{equation}
\end{defn}

We will exploit this form in Definition~\ref{def:linear algebra} to define how $\mcL$ acts on wiring diagrams in terms of one single matrix equation, in place of the seemingly complicated commutative diagrams in (\ref{eqn:G of wd}). To do so, we also recast wiring diagrams in matrix format in Definition~\ref{def:rewrite2} below. 

\begin{defn} \label{def:rewrite2}
Suppose $\Phi=(X,Y,\varphi)$ is a wiring diagram in $\bfW_\bfL$. Recalling (\ref{eqn:prodwd}), we apply the dependent product functor to $\varphi$: 
\[\ol{\varphi}\taking\ol{\outp{X}}\times\ol{\inp{Y}}\to\ol{\inp{X}}\times\ol{\outp{Y}}\]
Since this is a morphism in $\bfL$, it can be decomposed into four linear maps
\begin{align*}
\overline{\varphi}^{X,X}&\taking \overline{\outp{X}}\to\overline{\inp{X}}&\overline{\varphi}^{X,Y}&\taking \overline{\outp{X}}\to\overline{\outp{Y}}\\
\overline{\varphi}^{Y,X}&\taking \overline{\inp{Y}}\to\overline{\outp{X}}&\overline{\varphi}^{Y,Y}&\taking \overline{\inp{Y}}\to\overline{\outp{Y}}
\end{align*}
By virtue of the no passing wires condition in Definition~\ref{def:W}, we must have \mbox{$\overline{\varphi}^{Y,Y}=0$}. We can then, as in (\ref{eqn:matrixform}), organize this information in one single matrix:

\[ \overline{\varphi}=
\begin{bmatrix} \;\overline{\varphi^{X,X}} &\overline{\varphi^{X,Y}}\; \\
\overline{\varphi^{Y,X}}
& 0 
\end{bmatrix}
\]
\end{defn}

\begin{rem}
The bijectivity condition in Definition~\ref{def:W} implies that $\overline{\varphi}$ is a permutation matrix.
\end{rem}

We now employ these matrix characterizations to define the algebra $\mcL$ of linear open systems.

\begin{defn} \label{def:linear algebra} We define the algebra $\mcL\taking(\bfW_\bfL,\oplus,0)\to (\Set,\times,\star)$ as follows. Let $X\in\Ob\bfW_{\bfL}$. Then the \emph{set of linear open systems $\mcL(X)$ on $X$} is defined as
\[\mcL(X):=\big\{(S,f)\;|\;S\in\Ob\TFS{\bfL}, (\overline{S},\overline{\inp{X}},\overline{\outp{X}},f)\in\Ob\ODS _\bfL\big\}.\]

Let $\Phi=(X,Y,\varphi)$ be a wiring diagram. Then, as in Definition~\ref{def:general algebra}, we define $\mcL (\Phi)(S,f):=(S,g)$. We use the format of Definitions~\ref{def:rewrite} and \ref{def:rewrite2} to define $g$:
\begin{equation} \label{eqn:glin}
\begin{split}  
g=
\begin{bmatrix} g^{S,S} & g^{S,X} \\ g^{X,S} & g^{X,X} \end{bmatrix} & =\begin{bmatrix} f^{S,X} & 0 \\ 0 & I \end{bmatrix}
\overline{\varphi}
\begin{bmatrix} f^{X,S} & 0 \\ 0 & I \end{bmatrix}+\begin{bmatrix} f^{S,S} & 0 \\ 0 & 0 \end{bmatrix} \\
& =\begin{bmatrix} f^{S,X} & 0 \\ 0 & I \end{bmatrix}
\begin{bmatrix}\ol{\varphi}^{X,X}&\ol{\varphi}^{X,Y}\\\ol{\varphi}^{Y,X}&\ol{\varphi}^{Y,Y}\end{bmatrix}
\begin{bmatrix} f^{X,S} & 0 \\ 0 & I \end{bmatrix}+\begin{bmatrix} f^{S,S} & 0 \\ 0 & 0 \end{bmatrix} \\
& =\begin{bmatrix} f^{S,X}\overline{\varphi}^{X,X}f^{X,S}+f^{S,S} & f^{S,X}\overline{\varphi}^{X,Y} \\ \overline{\varphi}^{Y,X}f^{X,S} & 0 \end{bmatrix}
\end{split}
\end{equation} 
This is really just a linear version of the commutative diagrams in (\ref{eqn:G of wd}). For example, the equation $g^{S,S}=f^{S,X}\overline{\varphi}^{X,X}f^{X,S}+f^{S,S}$ can be read off the diagram for $\inp{g}$ in (\ref{eqn:G of wd}), using the additivity of $\bfL$.

Finally, The coherence map $\mu_{\bfL_{X,X'}}:\mcL (X)\times\mcL (X')\to\mcL (X\oplus X')$ is given, as in Definition~\ref{def:mu}, by $\big((S,f),(S',f')\big)\mapsto (S+S',f\times f')$.
\end{defn}

We now establish that this constitutes an algebra.

\begin{prop} The pair $(\mcL,\mu_\bfL)$ of Definition~\ref{def:linear algebra} is a lax monoidal functor, i.e. a $\bfW_\bfL$-algebra.
\end{prop}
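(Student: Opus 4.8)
The plan is to reduce the claim for $\mcL$ to the already-established fact that $\mcG$ is a $\bfW$-algebra (Proposition~\ref{prop:G is W-alg}), rather than repeating the associativity bookkeeping from scratch. The key observation is that $\bfL\hookrightarrow\Man$ is a finite-product subcategory, so there is a forgetful natural transformation exhibiting $\mcL$ as a ``subfunctor'' of $\mcG$ along the evident functor $\bfW_\bfL\to\bfW$: every linear open system is in particular a smooth open system, and the linear matrix recipe in \eqref{eqn:glin} computes exactly the same map $g$ as the commutative diagrams in \eqref{eqn:G of wd} when everything in sight is linear. Concretely, I would first check the two compatibilities asserted parenthetically in Definition~\ref{def:linear algebra}: (i) that $\mcL(\Phi)(S,f)$ as defined by the matrix product indeed lies in $\mcL(Y)$, i.e. that $g$ is linear with $g^{X,X}$-block equal to $0$ — this is immediate since $g$ is a sum of composites of linear maps and the lower-right block is $\ol\varphi^{Y,X}f^{X,S}\cdot 0 + \ol\varphi^{Y,Y}f^{X,S}\cdot I = 0$ using $\ol\varphi^{Y,Y}=0$ and $f^{U,U}=0$; and (ii) that this $g$ agrees with $\mcG(\Phi)f$. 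Point (ii) is the crux: one reads the $\inp g$ diagram of \eqref{eqn:G of wd}, uses $\overline{\inp\varphi}=\big(\ol\varphi^{X,X}\;\ol\varphi^{Y,X}\big)$ acting on $\big(\overline{\outp X},\overline{\inp Y}\big)$, the additivity of $\bfL$ to split $\inp f = (f^{M,M}\;f^{M,U})$, and tracks the diagonal $\Delta$; the resulting formula for $\inp g$ on $(s,y)$ is $f^{S,S}s + f^{S,X}\ol\varphi^{X,X}f^{X,S}s + f^{S,X}\ol\varphi^{Y,X}y$, which is exactly the top row of the final matrix in \eqref{eqn:glin}. Similarly $\outp g = \overline{\outp\varphi}\circ\outp f = \ol\varphi^{X,Y}f^{X,S}$ matches the bottom-left block.

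Once (i) and (ii) are in hand, functoriality of $\mcL$ is inherited: for wiring diagrams $\Phi=(X,Y,\varphi)$ and $\Psi=(Y,Z,\psi)$ in $\bfW_\bfL$, we have $\mcL(\Psi\circ\Phi)(S,f) = \big(S,\mcG(\Psi\circ\Phi)f\big) = \big(S,\mcG(\Psi)(\mcG(\Phi)f)\big) = \mcL(\Psi)(\mcL(\Phi)(S,f))$, where the middle equality is Proposition~\ref{prop:G is W-alg} and the outer two are (ii) together with (i) guaranteeing that $\mcG(\Phi)f$ is again a \emph{linear} open system on $Y$ (so that $\mcL(\Psi)$ may legitimately be applied to it). Preservation of identities is the matrix identity $\begin{bmatrix} f^{S,X} & 0 \\ 0 & I\end{bmatrix} \begin{bmatrix} I & 0 \\ 0 & I\end{bmatrix}\begin{bmatrix} f^{X,S} & 0 \\ 0 & I\end{bmatrix} + \begin{bmatrix} f^{S,S} & 0 \\ 0 & 0\end{bmatrix} = \begin{bmatrix} f^{S,S}+f^{S,X}f^{X,S} & f^{S,X} \\ f^{X,S} & 0\end{bmatrix}$; one must check that $\overline{\varphi}$ for the identity wiring diagram $\iota_X$ has $\ol\varphi^{X,X}=0$ and $\ol\varphi^{X,Y}=\ol\varphi^{Y,X}=I$ (the identity on $\inp X+\outp X\to\inp X+\outp X$ swaps the roles of inputs and outputs across the $\varphi\colon\inp X+\outp X\to\outp X+\inp X$ convention), recovering $g=f$.

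Finally, for the lax monoidal structure one checks that $\mu_{\bfL}$ is natural in both arguments and satisfies the associativity and unit coherence diagrams. Since $\mu_{\bfL}\big((S,f),(S',f')\big) = (S+S', f\times f')$ is literally the restriction of $\mu$ from Definition~\ref{def:mu} — the coproduct $S+S'$ in $\TFS{\bfL}$ sits inside the coproduct in $\TFS{}$, and the product $f\times f'$ of linear open systems (Lemma~\ref{def:osprod}) is again linear — the required coherence follows from the corresponding facts for $(\mcG,\mu)$ established in Proposition~\ref{prop:G is W-alg}, together with the associativity and unity isomorphisms of $\oplus$ on $\bfW_\bfL$ noted in the proof of Proposition~\ref{prop:W is SMC}. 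I expect the only genuinely nontrivial step to be (ii), the verification that the compact matrix formula \eqref{eqn:glin} reproduces the diagrammatic definition \eqref{eqn:G of wd}; everything else is formal transport of structure along the inclusion $\bfL\hookrightarrow\Man$. This step is routine diagram-chasing made shorter by additivity, so I would present it briefly by matching blocks rather than drawing the full dual of \eqref{eqn:associativity}.
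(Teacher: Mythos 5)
Your proposal is correct, but it takes a genuinely different route from the paper's. The paper proves functoriality by a self-contained matrix computation inside $\bfW_\bfL$: it first expresses the composite wiring diagram $\ol{\omega}$ as a block matrix in terms of $\ol{\varphi}$ and $\ol{\psi}$ (its equation \eqref{eqn:omegamatrix}, the linear shadow of the diagrams \eqref{dia:composition diagrams}), and then checks by block multiplication that $\mcL(\Psi)(\mcL(\Phi)f)=\mcL(\Psi\circ\Phi)f$; it never invokes Proposition~\ref{prop:G is W-alg} except for coherence. You instead transport functoriality from Proposition~\ref{prop:G is W-alg} along $\bfW_\bfL\hookrightarrow\bfW$, which requires exactly your steps (i) and (ii): that $\mcG(\Phi)$ preserves linearity, and that the matrix recipe \eqref{eqn:glin} computes the same $g$ as the diagrams \eqref{eqn:G of wd}. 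The paper asserts (ii) only parenthetically (``this is really just a linear version of\dots''), so your argument makes precise something the paper leaves informal and avoids redoing the composition bookkeeping; the paper's computation, in exchange, yields the explicit and independently useful formula \eqref{eqn:omegamatrix} for $\ol{\omega}$ and never needs to reconcile the two descriptions of $\mcL(\Phi)$. Both treatments dispatch coherence by citing the argument for $\mcG$, and your element-wise verification of (ii) is the correct one: $\inp{g}(s,y)=\inp{f}\bigl(s,\ol{\inp{\varphi}}(\outp{f}(s),y)\bigr)$ expands by additivity to the top row of \eqref{eqn:glin}. Two small blemishes, neither fatal: the displayed ``matrix identity'' in your unit check uses $\ol{\varphi}=\left[\begin{smallmatrix}I&0\\0&I\end{smallmatrix}\right]$, which is not the dependent product of the identity wiring diagram and does not give $g=f$ --- as you then correctly note, $\ol{\iota}$ is the antidiagonal block permutation $\left[\begin{smallmatrix}0&I\\I&0\end{smallmatrix}\right]$, and only with that correction does the unit law hold (a check the paper omits entirely); and your labels $\ol{\varphi}^{X,Y}$ versus $\ol{\varphi}^{Y,X}$ for the off-diagonal blocks are swapped relative to their positions in \eqref{eqn:glin}, though Definition~\ref{def:rewrite2} is itself inconsistent on this point, so the discrepancy is cosmetic.
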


\begin{proof} Since coherence is identical to that in Proposition~\ref{prop:G is W-alg}, it will suffice to show functoriality. Let $\Phi=(X,Y,\varphi)$ and $\Psi=(Y,Z,\psi)$ be wiring diagrams with composition $\Psi\circ\Phi=(X,Z,\omega)$. We now rewrite $\overline{\omega}$ using a matrix equation in terms of $\overline{\varphi}$ and $\overline{\psi}$ by recasting (\ref{dia:composition diagrams}) in matrix form below.

\begin{equation} \label{eqn:omegamatrix}
\begin{split}
\overline{\omega}=\begin{bmatrix} \overline{\omega^{X,X}} & \overline{\omega^{X,Z}} \\ \overline{\omega^{Z,X}} & \overline{\omega^{Z,Z}} \end{bmatrix} & = \begin{bmatrix} \overline{\varphi}^{X,Y} & 0 \\ 0 & I \end{bmatrix}\overline{\psi}\begin{bmatrix} \overline{\varphi}^{Y,X} & 0 \\ 0 & I \end{bmatrix}+\begin{bmatrix} \overline{\varphi}^{X,X} & 0 \\ 0 & 0 \end{bmatrix} \\ 
& =\begin{bmatrix} \overline{\varphi}^{X,Y}\overline{\psi}^{Y,Y}\overline{\varphi}^{Y,X}+\overline{\varphi}^{X,X} & \overline{\varphi}^{X,Y}\overline{\psi}^{Y,Z} \\ \overline{\psi}^{Z,Y}\overline{\varphi}^{Y,X} & 0 \end{bmatrix}
\end{split}
\end{equation}

We now prove that $\mcL (\Psi\circ\Phi)=\mcL (\Psi)\circ\mcL (\Phi)$. We immediately have $\mcL (\Psi\circ\Phi)S=S=\mcL (\Psi)(\mcL (\Phi)S)$. Let $h:=\mcL (\Psi\circ\Phi)f$ and \mbox{$k:=\mcL (\Psi)(\mcL (\Phi)f)$}. We must show $h=k$. Let $g=\mcL (\Phi)f$ and $\Psi\circ\Phi=(X,Z,\omega)$. It is then straightforward matrix arithmetic to see that
\begin{equation} \label{eqn:matrix}
\begin{split}
k=\mcL (\Psi)g &=\begin{bmatrix} g^{S,Y} & 0 \\ 0 & I \end{bmatrix}\overline{\psi}\begin{bmatrix} g^{Y,S} & 0 \\ 0 & I \end{bmatrix} + \begin{bmatrix} g^{S,S} & 0 \\ 0 & 0 \end{bmatrix} \\ 
& =\begin{bmatrix}f^{S,X}(\overline{\varphi}^{X,Y}\overline{\psi}^{Y,Y}\overline{\varphi}^{Y,X}+\overline{\varphi}^{X,X})f^{X,S}+f^{S,S} & f^{S,X}\overline{\varphi}^{X,Y}\overline{\psi}^{Y,Z} \\
\overline{\psi}^{Z,Y}\overline{\varphi}^{Y,X}f^{X,S} & 0 \end{bmatrix} \\
&=\begin{bmatrix} f^{S,X} & 0 \\ 0 & I \end{bmatrix}\overline{\omega}\begin{bmatrix} f^{X,S} & 0 \\ 0 & I \end{bmatrix} + \begin{bmatrix} f^{S,S} & 0 \\ 0 & 0 \end{bmatrix} =\mcL (\Psi\circ\Phi)f=h
\end{split}
\end{equation}
Therefore, the pair $(\mcL,\mu_\bfL)$ constitutes a lax monoidal functor $\bfW_{\bfL}\to\mathbf{Set}$, i.e., a $\bfW_{\bfL}$-algebra.
\end{proof}

\begin{rem} Although we've been referring to $\mcL$ as a subalgebra of $\mcG$, this is technically not the case since they have different source categories. The following diagram illustrates precisely the relationship between the $\bfW_{\bfL}$-algebra $\mcL$, defined above, and the $\bfW$-algebra $\mcG$, defined in Section \ref{sec:g}.
\begin{equation} \label{eqn:final}
\xymatrix@C=16pt@R=30pt{
\bfW_{\bfL}\ar@{^{(}->}[rr]^{\bfW_i} \ar[dr]_{\mcL }&\ar@{}[d]|(.4){\overset{\textstyle\epsilon}{\Longrightarrow}}&\bfW\ar[dl]^{\mcG }\\
&\Set 
} 
\end{equation}
Here, the natural inclusion $\bfW_i\taking\bfW_{\bfL}\inj\bfW$ corresponds to $i\taking\bfL\hookrightarrow\Man$, and we have a natural transformation $\epsilon:\mcL \to\mcG \circ i$. Hence for each \mbox{$X\in\Ob\bfW_{\bfL}$}, we have a function $\epsilon_X:\mcL (X)\to \mcG (i(X))=\mcG (X)$ that sends the linear open system $(S,f)\in\mcL (X)$ to the open system \mbox{$(\TFS{i}(S),i(f))=(S,f)\in\mcG (X)$}.
\end{rem}

As promised, we now reformulate Example~\ref{ex:main} in terms of our language.

\begin{ex} \label{ex:as promised} For the reader's convenience, we reproduce Figure~\ref{fig:pipebrine} and Table~\ref{tab:explicit}.

\begin{figure}[ht]
\activetikz{
	\path(0,0);
	\blackbox{(10,5)}{2}{1}{$Y$}{.7}
	        \node at (.4,3.6) {\small $\inp{Y}_{a}$};
	        \node at (.4,1.9) {\small $\inp{Y}_{b}$};
	        \node at (9.6,2.75) {\small $\outp{Y}_a$};
	\path(2,1.5);
	\blackbox{(2,2)}{2}{1}{$X_1$}{.5}
	        \node at (3,2.6) {\tiny $Q_1(t)$ oz salt};
	        \node at (3,2.3) {\tiny 30 gal water};
	        \node at (1.7,3.06) {\small $\inp{X}_{1a}$};
	        \node at (1.7,2.4) {\small $\inp{X}_{1b}$};
	        \node at (4.38,2.7) {\small $\outp{X}_{1a}$};
	\path(6,1.5);
	\blackbox{(2,2)}{2}{2}{$X_2$}{.5}
	        \node at (7,2.6) {\tiny $Q_2(t)$ oz salt};
	        \node at (7,2.3) {\tiny 20 gal water};
	        \node at (5.7,3.07) {\small $\inp{X}_{2a}$};
	        \node at (5.7,2.4) {\small $\inp{X}_{2b}$};
	        \node at (8.37,3.07) {\small $\outp{X}_{2a}$};
	        \node at (8.37,2.37) {\small $\outp{X}_{2b}$};
	\directarc{(4.25,2.5)}{(5.75,2.16667)} 
	    \node at (5,2) {\tiny 3 gal/min};
	\directarc{(0.35,1.6667)}{(1.75,2.83333)} 
	    \node at (.7,1.4) {\tiny 1.5 gal/min};
	    \node at (.7,1.2) {\tiny 1 oz/gal};
	\fancyarc{(0.35,3.3333)}{(5.75,2.83333)}{-40}{25} 
	    \node at (.6,3.1) {\tiny 1 gal/min};
	    \node at (.6,2.9) {\tiny 3 oz/gal};
	\directarc{(8.25,2.8333)}{(9.65,2.5)} 
	    \node at (9.5,2.3) {\tiny 2.5};
	    \node at (9.5,2.1) {\tiny gal/min};
	\fancyarc{(1.75,2.16667)}{(8.25,2.16667)}{20}{-45} 
	    \node at (5,.3) {\tiny 1.5 gal/min};
}
\caption{A dynamical system from Boyce and DiPrima interpreted over a wiring diagram $\Phi=(X_1,X_2;Y;\varphi)$ in $\Opd{\bfW}$.}
\end{figure}

\noindent\begin{minipage}{\linewidth}
\[
\begin{array}{c||c|c|c|c|c}
\rule[-4pt]{0pt}{16pt}
w\in\inp{X}+\outp{Y}&\inp{X}_{1a}&\inp{X}_{1b}&\inp{X}_{2a}&\inp{X}_{2b}&\outp{Y}_{a}
\\\hline
\rule[-4pt]{0pt}{16pt}
\varphi(w)\in\outp{X}+\inp{Y}&\inp{Y}_{b}&\outp{X}_{2b}&\inp{Y}_{a}&\outp{X}_{1a}&\outp{X}_{2a}
\end{array}
\]
\smallskip
\captionof{table}{}
\end{minipage}

We can invoke the yoga of Definition~\ref{def:rewrite2} to write $\overline{\varphi}$ as a matrix below:

\begin{equation} \label{eqn:phimatrix}
\begin{bmatrix} \;\overline{\outp{X_{1a}}}\; \\ \overline{\outp{X_{2a}}} \\ \overline{\outp{X_{2b}}} \\ \overline{\inp{Y_a}} \\ \overline{\inp{Y_b}} \end{bmatrix} =
\begin{bmatrix} 
0 &0 &I &0 &0 \\
0 &0 &0 &0 &I \\
0 &I &0 &0 &0 \\
I &0 &0 &0 &0 \\
0 &0 &0 &I &0
\end{bmatrix} \begin{bmatrix} \;\overline{\inp{X_{1a}}}\; \\ \overline{\inp{X_{1b}}} \\ \overline{\inp{X_{2a}}} \\ \overline{\inp{X_{2b}}} \\ \overline{\outp{Y_a}} \end{bmatrix}
\end{equation}

One can think of $\overline{\varphi}$ as a block permutation matrix consisting of identity and zero matrix blocks. An identity matrix in block entry $(i,j)$ represents the fact that the port whose state space corresponds to row $i$ and the one whose state space corresponds to column $j$ get linked by $\Phi$. In general, the dimension of each $I$ is equal to the dimension of the corresponding state space and hence the formula in~(\ref{eqn:phimatrix}) is true, independent of the typing. In the specific example of this system, however, all of these ports are typed in $\RR$, and so we have $I=1$ in~(\ref{eqn:phimatrix}).

As promised in Example~\ref{ex:promise}, we now write the open systems for the $X_i$ in Figure~\ref{fig:pipebrine} as elements of $\mcL (X_i)$. The linear open systems below in (\ref{eqn:tanks}) represent $f_1$ and $f_2$, respectively.
\begin{equation}
\label{eqn:tanks}
 \left[ \begin{array}{c} \dot{Q}_1 \\ \outp{X_{1a}} \end{array} \right] = \begin{bmatrix} -.1 & 1 & 1 \\ .1 & 0 & 0 \end{bmatrix} \left[ \begin{array}{c} Q_1 \\ \inp{X_{1a}} \\ \inp{X_{1b}} \end{array} \right], \left[ \begin{array}{c} \dot{Q}_2 \\ \outp{X_{2a}} \\ \outp{X_{2b}} \end{array} \right] = \begin{bmatrix} -.2 & 1 & 1 \\ .125 & 0 & 0 \\ .075  & 0 & 0\end{bmatrix} \left[ \begin{array}{c} Q_2 \\ \inp{X_{2a}} \\ \inp{X_{2b}} \end{array} \right]
\end{equation}

Note the proportion of zeros and ones in the $f$-matrices of (\ref{eqn:tanks})---this is perhaps why the making explicit of these details was an afterthought in (\ref{eqn:naive}). Because we may have arbitrary nonconstant coefficients, our formalism can capture more intricate systems. 

We then use (\ref{eqn:phimatrix}) to establish that $\inp{X}_{1b}=\outp{X}_{2b}$ and $\inp{X}_{2b}=\outp{X}_{1a}$. This allows us to recover the equations in (\ref{eqn:naive}):

\begin{displaymath}
   \left\{
     \begin{array}{lr}
       \dot{Q}_1=-.1Q_1+\inp{X_{1a}}+\inp{X_{1b}}=-.1Q_1+1.5+\outp{X_{2b}}=-.1Q_1+.075Q_2+1.5 \\
       \dot{Q}_2=-.2Q_2+\inp{X_{2a}}+\inp{X_{2b}}=-.2Q_2+3+\outp{X_{1a}}=-.2Q_2+.1Q_1+3
     \end{array}
   \right.
\end{displaymath}

The coherence map in Definition~\ref{def:linear algebra} gives us the combined tank system: \[(Q,f):=\mu_\bfL((\{Q_1\},f_1),(\{Q_2\},f_2))=(\{Q_1,Q_2\},f_1\times f_2)\in\mcL(X).\]  This system can then be written out as a matrix below
\begin{equation}\label{eqn:combinedsystem}\begin{bmatrix}\dot{Q_1} \\ \dot{Q_2} \\ \outp{X_{1a}} \\ \outp{X_{2a}} \\ \outp{X_{2b}}\end{bmatrix}=\begin{bmatrix} -.1 & 0 & 1 & 1 & 0 & 0 \\ 0 & -.2 & 0 & 0 & 1 & 1 \\ .1 & 0 & 0 & 0 & 0 & 0\\ 0 & .125 & 0 & 0 & 0 & 0 \\ 0 & .075 & 0 & 0 & 0 & 0  \end{bmatrix}\begin{bmatrix}Q_1 \\ Q_2 \\ \inp{X_{1a}} \\ \inp{X_{1b}} \\ \inp{X_{2a}} \\ \inp{X_{2ba}}\end{bmatrix}\end{equation} Finally, we can apply formula (\ref{eqn:glin}) to (\ref{eqn:combinedsystem}) above to express as a matrix the open system $(Q,g)=(\Phi)f\in\mcL(Y)$ for the outer box $Y$.

\[ \left[ \begin{array}{c} \dot{Q}_1 \\ \dot{Q}_2 \\ \outp{Y} \end{array} \right] = \begin{bmatrix} -.1 & .075 & 0 & 1 \\ .1 & -.2 & 1 & 0 \\ 0 & 1 & 0 & 0 \end{bmatrix} \left[ \begin{array}{c} Q_1 \\ Q_2 \\ \inp{Y_a} \\ \inp{Y_b} \end{array} \right] \]
\end{ex}

%
%

\medskip

\bibliographystyle{annotate}


\end{document}